\documentclass[11pt]{article}

\usepackage[T1]{fontenc}
\usepackage[utf8]{inputenc}
\usepackage{lmodern}
\usepackage[margin=1in]{geometry}
\usepackage{amsmath,amssymb,amsthm,mathtools}
\usepackage{bm}
\usepackage{array,booktabs}
\usepackage{graphicx}
\usepackage[section]{placeins}
\usepackage{enumitem}
\usepackage{microtype}
\usepackage[numbers,sort&compress]{natbib}
\usepackage{xcolor}
\usepackage{hyperref}
\usepackage[nameinlink,noabbrev]{cleveref}

\hypersetup{
  colorlinks=true,
  linkcolor=blue!55!black,
  citecolor=green!35!black,
  urlcolor=blue!60!black
}

\newtheorem{theorem}{Theorem}[section]
\newtheorem{proposition}[theorem]{Proposition}
\newtheorem{lemma}[theorem]{Lemma}
\newtheorem{corollary}[theorem]{Corollary}
\theoremstyle{definition}

\theoremstyle{remark}

\newcommand{\R}{\mathbb{R}}
\newcommand{\Z}{\mathbb{Z}}
\newcommand{\Sphere}{\mathbb{S}}

\newcommand{\ip}[2]{\left\langle #1,#2\right\rangle}
\newcommand{\norm}[1]{\left\lVert #1\right\rVert}
\newcommand{\abs}[1]{\left\lvert #1\right\rvert}
\newcommand{\proj}[1]{P_{#1}^{\perp}}
\newcommand{\abar}{\underline a_{\beta,n}}
\newcommand{\Dini}{D^{+}}
\newcommand{\diag}{\operatorname{diag}}
\newcommand{\spec}{\operatorname{spec}}

\newcommand{\distS}{d_{\Sphere}}
\newcommand{\one}{\mathbf{1}}
\newcommand{\e}{\mathrm{e}}
\newcommand{\ii}{\mathrm{i}}
\newcolumntype{P}[1]{>{\raggedright\arraybackslash}p{#1}}
\crefname{assumption}{assumption}{assumptions}

\hypersetup{
  pdftitle={Self-Attention Dynamics with Rotary Position Embeddings: Twisted States and Explicit Consensus Rates on the Sphere},
  pdfauthor={Hao Ye},
  pdfsubject={Dynamical systems for spherical self-attention with rotary position embeddings},
  pdfkeywords={self-attention dynamics, rotary position embeddings, consensus on spheres, twisted states, spectral gaps}
}

\title{Self-Attention Dynamics with Rotary Position Embeddings:\\
Twisted States and Explicit Consensus Rates on the Sphere}
\author{Hao Ye\\[0.35em]{\small Xi'an Institute of Optics and Precision Mechanics, Chinese Academy of Sciences}\\{\small University of Chinese Academy of Sciences}\\{\small\texttt{yehao22@mails.ucas.ac.cn}}}
\date{July 2026}

\begin{document}

\maketitle
\begin{abstract}
Rotary position embeddings (RoPE) modify attention scores through
position-dependent rotations, but their effect on normalized token dynamics is
not captured by the vanilla spherical self-attention model.  We study the
continuous-time dynamics obtained when queries and keys are rotated while
values remain on the unit sphere.  The resulting attention kernel is reversible
and admits a sharp uniform softmax floor, yet the natural RoPE interaction
energy has derivatives of both signs within one fixed nontrivial system.  Every
consensus state remains an equilibrium, and its transverse linearization is a
reversible Markov operator whose kernel depends on the consensus point through
its energy across RoPE planes.  On a resonant single-frequency ring we derive an
exact Bessel-aliasing spectrum, including non-coprime frequencies and the
correct fixed-ring large-$\beta$ asymptotics.  Globally, closed hemispheres
are invariant, while pairwise non-obtuse configurations and strict open
semicircles contract with explicit half-angle and single-point tail bounds.
These regional estimates instantiate a kernel-generic positivity principle
with the sharp RoPE softmax floor.  RoPE also selects an explicit
score-flattening twisted branch; the generic resonant family is
non-hyperbolic and linearly unstable, whereas an odd antipodal family becomes a
hyperbolic saddle after quotienting global rotation.  In multiple dimensions,
the local consensus gap can depend non-monotonically on the allocation of
energy across frequency planes, so no universal ordering by frequency is
valid.  Independent matrix, finite-difference, and nonlinear-flow computations
cross-check the theorem boundaries and the reported constants.
\end{abstract}

\section{Introduction}
\label{sec:introduction}

Self-attention is the interaction mechanism at the core of the Transformer
\citep{vaswani2017attention}.  In a deep residual stack, the layer index can be
idealized as time, turning token representations into an interacting particle
system.  This viewpoint has exposed clustering, metastability, and
synchronization phenomena that are difficult to see from a layerwise
algebraic description alone
\citep{geshkovski2023emergence,geshkovski2023mathematical,geshkovski2024dynamic,rigollet2025meanfield}.
Normalization is especially important in this limit because it confines the
particles to a sphere, changes contraction speeds, and can expose gradient or
spectral structure
\citep{burger2025meanfield,karagodin2025normalization,kuehn2026spectral}.

Rotary position embedding encodes position by rotating query and key blocks,
thereby converting absolute rotations into relative phase differences inside
the attention score \citep{su2021roformer}.  This geometric modification is not
an innocuous relabeling of the spherical dynamics: the rotated score controls
the weights, while the unrotated value vectors determine the motion.  Related
analyses show that masking, normalization, and positional encoding can each
alter asymptotic token behavior
\citep{wu2024role,karagodin2024clustering,rodriguezabella2024asymptotic,pham2025dynamical},
but the normalized query/key-only RoPE flow requires its own equilibrium,
spectrum, and convergence analysis.

The central observation is that RoPE preserves a symmetric positive kernel at
consensus even though it precludes an energy-sign law uniform over the RoPE
model class away from consensus.
This separation leads to two complementary conclusions.  Locally, consensus is
governed by the spectral gap of a reversible Markov kernel and can be much
slower than vanilla attention.  Globally, convergence remains provable in
contractive geometric regions without invoking a Lyapunov function.  The same
phase geometry also produces twisted and antipodal equilibria that are absent
from the simplest consensus narrative.

The paper makes the following contributions.

\begin{enumerate}[leftmargin=2em]
  \item Starting from a normalized residual attention step, we formulate
  query/key-only RoPE self-attention on $(\Sphere^{d-1})^n$, prove global
  well-posedness, reversibility, and the sharp score-range attention floor, and
  give exact same-parameter witnesses showing that the natural RoPE interaction
  energy has no uniform monotonic sign.
  \item We derive the full tangent linearization at consensus.  On a resonant
  single-frequency ring, we obtain an exact Bessel-aliasing spectrum that
  handles $\gcd(m,n)>1$, and we separate the fixed-ring exponential
  large-$\beta$ regime from an unproved dense-phase joint limit.
  \item We instantiate a kernel-generic positivity argument with the sharp RoPE
  floor to obtain forward invariance of fixed closed hemispheres and explicit
  convergence rates in pairwise non-obtuse regions and strict open
  semicircles, including a geodesic tail bound to the limiting consensus
  point.
  \item We characterize a RoPE-locked, score-flattening twisted branch and its
  linear spectrum, including the odd antipodal exception, and prove that a
  multi-frequency consensus kernel is positive semidefinite with a potentially
  non-monotone dependence on the consensus-plane energies.
\end{enumerate}

Independent numerical cross-checks complement the proofs for the most
convention-sensitive formulas, including Bessel aliasing, twisted-state
Jacobians and drift, Dini inequalities, positive semidefiniteness, frequency
aliasing, and nonlinear decay rates.

The next section compares these claims at theorem level with normalized
attention dynamics, positional-encoding analyses, and oscillator
interpretations.  Our results do not assert synchronization from arbitrary
initial data or claim that twisted states, Toeplitz score structure, or the
RoPE--oscillator dictionary are new in themselves.  The contribution is the
exact asymptotic theory of the normalized query/key-only flow: its
RoPE-selected equilibrium branch, row-normalized consensus spectrum, and
regional convergence rates.

\section{Related work and contribution boundary}
\label{sec:related-work}

\paragraph{Normalized and spherical attention dynamics.}
Continuous-depth self-attention has been studied as a finite-particle or
mean-field system, with results on clustering, metastability, normalization,
and gradient-flow structure
\citep{geshkovski2023emergence,geshkovski2024dynamic,
burger2025meanfield,karagodin2025normalization}.  Recent finite-particle work
classifies consensus, bipartite, clustered, and polygonal equilibria
\citep{altafini2025multistability}, analyzes spectral selection through the
learned value matrix under symmetric weights \citep{kuehn2026spectral}, and
extends energy arguments to multiple heads \citep{pendharkar2026gradient}.
These results establish that polygonal equilibria and spherical spectral
analysis are not specific to RoPE.  Our identity-value model instead breaks
permutation symmetry through fixed query/key rotations.  Its spectrum is that
of a row-normalized \emph{position} kernel at consensus, and its twisted
branch is selected by the condition that RoPE-adjusted scores become exactly
flat.

\begin{table}[t]
\centering
\caption{Closest-work comparison for standard RoPE
(rotated queries/keys and unrotated values).}
\label{tab:closest-work}
\small
\begin{tabular}{@{}P{0.20\linewidth}P{0.22\linewidth}P{0.24\linewidth}P{0.25\linewidth}@{}}
\toprule
Work & State/model & Main overlap & Boundary relative to this paper \\
\midrule
\citet{karagodin2025normalization}; \citet{kuehn2026spectral}
& Unit-sphere attention without positional encoding
& Normalized flow, rates, equilibria, spectral structure
& No RoPE position kernel or Bessel-aliasing gap \\
\citet{altafini2025multistability}
& Spherical self-attention with learned value matrix
& Polygonal equilibria and instability
& No RoPE-locked flat-score branch or its resonance spectrum \\
\citet{pham2025dynamical}
& Unnormalized Euclidean dynamics with positional encoding
& Continuous-time RoPE token dynamics
& Norm collapse/divergence rather than fixed-norm equilibrium and consensus rates \\
\citet{gu2026positional}; \citet{liu2026phase}
& Static logits and signal-level RoPE theory
& Multiplicative/Toeplitz structure, phase modulation, aliasing
& No normalized state flow or tangent Markov generator \\
\citet{nunley2026kuramoto}
& Modified causal torus architecture
& RoPE phase drift and Kuramoto coupling
& No asymptotic theory for the standard spherical RoPE flow \\
This paper
& Standard query/key-only RoPE on $(\Sphere^{d-1})^n$
& Equilibria, local spectrum, and convergence
& Exact non-coprime Bessel spectrum, regional rates, and multi-plane non-monotonicity \\
\bottomrule
\end{tabular}
\end{table}

Projected consensus and synchronization on spheres also predate attention
\citep{thunberg2018lifting,criscitiello2024synchronization}.  Accordingly, the
regional results in \cref{sec:global-convergence} are not presented as the
first hemispherical synchronization theorem.  Their role here is to give a
kernel-generic contraction principle with explicit constants after inserting
the sharp, state-uniform RoPE softmax floor.

\paragraph{Positional encoding and RoPE.}
\citet{pham2025dynamical} directly study positional encoding in
continuous-time token dynamics, including RoPE, but in an unnormalized
Euclidean model with general learned matrices and norm collapse or divergence
as the main asymptotic alternatives.  Our states have fixed unit norm, values
are unrotated, and the questions are equilibria, tangent rates, and regional
consensus.  Static analyses of multiplicative positional information and RoPE
phase modulation already expose Toeplitz logit structure, frequency
periodicity, and aliasing \citep{gu2026positional,liu2026phase}.  We therefore
do not claim novelty for a Toeplitz or Fourier representation by itself.  The
object analyzed here is the row-normalized consensus Markov kernel and, on a
resonant ring, its exact Bessel-aliasing spectrum, including unreachable modes for
non-coprime frequencies.  The observation that standard RoPE leaves the value
path position-blind, recently emphasized by the RoVE architecture
\citep{garciacastellanos2026rove}, is also the precise modeling boundary used
here.

\paragraph{Oscillator interpretations and twisted states.}
Kuramoto Attention makes the phase-coupling interpretation explicit in a
causal, discrete torus architecture \citep{nunley2026kuramoto}; its frustrated
variant modifies the value law using learned harmonic and delayed couplings
\citep{nunley2026frustrated}.  Thus the dictionary between rotary phases and
oscillator coupling is an existing modeling connection, not a claim of this
paper.  Twisted states likewise have a long history in oscillator networks
\citep{goebel2021twisted}.  What is specific here is an asymptotic result for
the standard query/key-only spherical flow: an if-and-only-if resonance
condition for the score-flattening branch, its exact Jacobian spectrum, and
the separate odd antipodal family.

To our knowledge, the combined theorem package in the final row is not
contained in the works above.  This statement is deliberately narrower than
claiming the first RoPE dynamics, the first oscillator interpretation, or the
first polygonal equilibrium.

\section{RoPE spherical self-attention}
\label{sec:model}

Let $n\ge2$ and let $d=2r\ge2$.  Token $i$ has a fixed position
$p_i\in\R$ and a state $x_i(t)\in\Sphere^{d-1}$.  Given finite frequencies
$\Omega=(\omega_1,\ldots,\omega_r)$, define the block rotation
\begin{equation}
R_p=\bigoplus_{\ell=1}^{r}
\begin{pmatrix}
\cos(\omega_\ell p)&-\sin(\omega_\ell p)\\
\sin(\omega_\ell p)& \cos(\omega_\ell p)
\end{pmatrix}.
\label{eq:rope-rotation}
\end{equation}
For finite $\beta\ge0$, the score, symmetric kernel, degree, and attention
weights are
\begin{align}
s_{ij}(x)&=\ip{R_{p_i}x_i}{R_{p_j}x_j},
&W_{ij}(x)&=\e^{\beta s_{ij}(x)},\nonumber\\
Z_i(x)&=\sum_{k=1}^{n}W_{ik}(x),
&A_{ij}(x)&=\frac{W_{ij}(x)}{Z_i(x)}.
\label{eq:attention-def}
\end{align}
RoPE acts only on queries and keys in \eqref{eq:attention-def}; the value
vectors remain unrotated.  This is the standard RoPE value pathway rather than
the rotated-value modification of \citet{garciacastellanos2026rove}.

The continuous model is the first-order limit of a normalized residual layer.
Let
\begin{equation}
y_i^\ell=\sum_{j=1}^{n}A_{ij}(x^\ell)x_j^\ell,
\qquad
x_i^{\ell+1}
=\frac{x_i^\ell+h y_i^\ell}
       {\norm{x_i^\ell+h y_i^\ell}},
\label{eq:normalized-residual-step}
\end{equation}
where $h>0$ is the residual step.  Since $\norm{x_i^\ell}=1$,
\begin{equation}
\frac{x_i^{\ell+1}-x_i^\ell}{h}
=
y_i^\ell-\ip{x_i^\ell}{y_i^\ell}x_i^\ell+O(h)
=\proj{x_i^\ell}y_i^\ell+O(h).
\label{eq:normalized-residual-limit}
\end{equation}
The remainder is uniform on the compact product sphere.  Thus, under the
controlled specialization $Q=K=V=I$ apart from the prescribed query/key RoPE
rotations, the depth-time limit of \eqref{eq:normalized-residual-step} is
\begin{equation}
\dot x_i
=\proj{x_i}\sum_{j=1}^{n}A_{ij}(x)x_j
=\sum_{j=1}^{n}A_{ij}(x)
\bigl(x_j-\ip{x_i}{x_j}x_i\bigr),
\qquad \proj{x}=I-xx^\top.
\label{eq:sphere-flow}
\end{equation}
after absorbing the fixed score scaling into $\beta$.  Related normalized
residual limits are used in spherical attention dynamics
\citep{karagodin2025normalization,kuehn2026spectral}; here
\eqref{eq:normalized-residual-limit} fixes the precise RoPE and value-path
conventions used below.

\begin{theorem}[Well-posedness, reversibility, and sharp floor]
\label{thm:wellposed-balance-floor}
For every initial state in $(\Sphere^{d-1})^n$, \eqref{eq:sphere-flow} has a
unique global solution.  At every state, $W=W^\top$, $A\one=\one$, and
\begin{equation}
\pi_iA_{ij}=\pi_jA_{ji},
\qquad
\pi_i=\frac{Z_i}{\sum_kZ_k}.
\label{eq:detailed-balance}
\end{equation}
Moreover,
\begin{equation}
A_{ij}(x)\ge \abar
:=\frac{1}{1+(n-1)\e^{2\beta}}
=\frac{\e^{-2\beta}}{n-1+\e^{-2\beta}}.
\label{eq:sharp-floor}
\end{equation}
The constant is sharp over the model class.  At $\beta=0$ it equals $1/n$;
for $\beta>0$ it is strictly stronger than the coarser
$\e^{-2\beta}/n$ bound.
\end{theorem}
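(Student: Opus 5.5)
The plan is to dispatch the three algebraic claims first, then well-posedness, then the floor and its sharpness. The structural identities are immediate from the definitions in \eqref{eq:attention-def}. Symmetry of $W$ follows because $s_{ij}=\ip{R_{p_i}x_i}{R_{p_j}x_j}$ is symmetric in $(i,j)$. Row-stochasticity holds since $Z_i$ is the $i$-th row sum of $W$. Detailed balance is the one-line computation
\[
\pi_iA_{ij}=\frac{Z_i}{\sum_kZ_k}\cdot\frac{W_{ij}}{Z_i}=\frac{W_{ij}}{\sum_kZ_k},
\]
which is symmetric in $(i,j)$ by $W=W^\top$. Here $Z_i>0$ because every $W_{ij}>0$.

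For well-posedness, I would view \eqref{eq:sphere-flow} as an ODE on $(\R^d)^n$ whose right-hand side $F_i(x)=\sum_jA_{ij}(x)(x_j-\ip{x_i}{x_j}x_i)$ is smooth, hence locally Lipschitz, on the open set where all $Z_i>0$. That set is all of $(\R^d)^n$. Picard--Lindelöf then gives a unique local solution. Invariance of the product sphere follows from $\frac{d}{dt}\norm{x_i}^2=2\ip{x_i}{F_i}=2\sum_jA_{ij}\ip{x_i}{x_j}(1-\norm{x_i}^2)$. This is a linear ODE in $\norm{x_i}^2-1$ with a continuous coefficient, so $\norm{x_i}\equiv1$ whenever it starts at $1$. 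On the compact manifold $(\Sphere^{d-1})^n$ the solution stays bounded, so it cannot blow up and extends to all $t\in\R$.

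For the floor, rotations are orthogonal, so $\norm{R_{p_i}x_i}=1$ and Cauchy--Schwarz gives $s_{ij}\in[-1,1]$. Then
\[
A_{ij}=\frac{1}{1+\sum_{k\ne j}\e^{\beta(s_{ik}-s_{ij})}}\ge\frac{1}{1+(n-1)\e^{2\beta}},
\]
using $s_{ik}-s_{ij}\le2$ and $\beta\ge0$. One point needs care: the $k=j$ term must be split off. The diagonal score $s_{ii}=1$ matters only when $j\ne i$, and it is still covered by the same bound. The alternative form of $\abar$ is obtained by multiplying numerator and denominator by $\e^{-2\beta}$. At $\beta=0$ the value is $1/n$. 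For $\beta>0$ we compare $n-1+\e^{-2\beta}<n$, which gives strict improvement over $\e^{-2\beta}/n$.

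The main obstacle is sharpness, since it requires configurations in the model class attaining (or approaching) the bound. The diagonal is forced: $s_{ii}=1$, so $A_{ii}\ge 1/n$, and the bound can be approached only off-diagonal. Take $j\ne i$. Then $s_{ij}-s_{ik}=2$ requires $s_{ij}=-1$ and $s_{ik}=1$ for all $k\ne j$. Choosing $R_{p_j}x_j=-R_{p_i}x_i$ and $R_{p_k}x_k=R_{p_i}x_i$ for all $k\ne j$ achieves this. Such states exist for any positions and frequencies, because each $R_p$ is invertible: set $x_k=R_{p_k}^{-1}u$ and $x_j=-R_{p_j}^{-1}u$. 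Every remaining $k$, including $k=i$, then has score $1$, so $A_{ij}$ equals $\abar$ exactly. Thus the constant is attained at every $n$, $\beta$, $\Omega$, and choice of positions, which is stronger than sharpness over the class. I would state this explicitly to fix what ``sharp over the model class'' means.
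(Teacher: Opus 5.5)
Your proposal is correct and follows essentially the same route as the paper. The paper likewise combines smoothness and compactness for global existence, symmetric scores from orthogonal rotations, the one-line detailed-balance identity, the softmax bound via $s_{ik}-s_{ij}\le2$, and the extremal configuration $x_k=R_{p_k}^{-1}u$ ($k\ne j$), $x_j=-R_{p_j}^{-1}u$ for sharpness. Your only refinement is proving sphere invariance through the linear ODE for $\norm{x_i}^2-1$, which makes explicit what the paper compresses into the tangency condition $\ip{x_i}{\dot x_i}=0$.
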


\Cref{thm:wellposed-balance-floor} is proved in
\cref{app:structural-proofs}.  Compactness of the product sphere supplies
global existence, while \eqref{eq:sharp-floor} is the optimal softmax bound for
scores in $[-1,1]$.  Detailed balance does not make $A$ symmetric unless the
degrees $Z_i$ are equal.

\subsection{Failure of the natural RoPE energy law}

On $\Sphere^1$, write $x_i=(\cos\theta_i,\sin\theta_i)$ and consider one
frequency $\omega$.  With
\begin{equation}
\delta_{ij}=\theta_i-\theta_j+\omega(p_i-p_j),
\qquad
E_+(\theta)=\sum_{i,j=1}^{n}\e^{\beta\cos\delta_{ij}},
\label{eq:natural-energy}
\end{equation}
the phase equation induced by \eqref{eq:sphere-flow} is
\begin{equation}
\dot\theta_i=\sum_jA_{ij}(\theta)\sin(\theta_j-\theta_i).
\label{eq:phase-flow}
\end{equation}

The energy in \eqref{eq:natural-energy} is ``natural'' because it recovers the
standard positive-sign interaction law when the rotary phase vanishes.  Indeed,
with
\[
u_i=\sum_jW_{ij}\sin\delta_{ij},
\qquad
v_i=\sum_jW_{ij}\sin(\theta_i-\theta_j),
\]
the exact identity
\begin{equation}
\dot E_+=2\beta\sum_i\frac{u_iv_i}{Z_i}
\label{eq:energy-derivative-summary}
\end{equation}
holds.  At $\omega=0$, $u_i=v_i$ and hence
\begin{equation}
\dot E_+=2\beta\sum_i\frac{u_i^2}{Z_i}\ge0.
\label{eq:vanilla-energy-sign}
\end{equation}
RoPE separates $u_i$ from $v_i$, so the argument no longer supplies a sign.
The following proposition shows that this loss is genuine rather than merely
a gap in that calculation.

\begin{proposition}[Same-system energy no-go]
\label{prop:energy-no-go}
Fix $n=3$, $(p_1,p_2,p_3)=(0,1,2)$, $\beta=1$, and $\omega=\pi$.
Then
\begin{equation}
\left.\frac{\mathrm d}{\mathrm dt}E_+\right|_{\theta=(0,\pi/6,\pi/3)}>0,
\qquad
\left.\frac{\mathrm d}{\mathrm dt}E_+\right|_{\theta=(0,\pi/6,0)}<0.
\label{eq:energy-sign-witnesses}
\end{equation}
Consequently, for this fixed nontrivial RoPE system, neither $E_+$ nor $-E_+$
is a Lyapunov function.  In particular, this energy form has no monotonicity law
of either sign that is uniform over the whole RoPE model class.
\end{proposition}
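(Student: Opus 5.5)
The plan is to evaluate $\dot E_+$ in closed form at the two witness states using the exact identity \eqref{eq:energy-derivative-summary}, and then read off the two signs. First I will confirm that identity. Differentiating \eqref{eq:natural-energy} gives $\dot E_+=-\beta\sum_{i,j}W_{ij}\sin\delta_{ij}\,(\dot\theta_i-\dot\theta_j)$. The summand is symmetric under $i\leftrightarrow j$, since $\sin\delta_{ij}$ and $\dot\theta_i-\dot\theta_j$ both change sign while $W_{ij}=W_{ji}$. Hence $\dot E_+=-2\beta\sum_i\dot\theta_i u_i$. By \eqref{eq:phase-flow}, $\dot\theta_i=-v_i/Z_i$, which yields \eqref{eq:energy-derivative-summary}.

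The key simplification is that $\omega=\pi$ and $p=(0,1,2)$ make the rotary phases $\omega(p_i-p_j)$ integer multiples of $\pi$. Writing $\sigma_{ij}=(-1)^{p_i-p_j}$, we get $\cos\delta_{ij}=\sigma_{ij}\cos(\theta_i-\theta_j)$ and $\sin\delta_{ij}=\sigma_{ij}\sin(\theta_i-\theta_j)$, with $\sigma_{12}=\sigma_{23}=-1$ and $\sigma_{13}=+1$. Consequently $W_{ij}=\e^{\sigma_{ij}\cos(\theta_i-\theta_j)}$ and $u_i=\sum_j W_{ij}\sigma_{ij}\sin(\theta_i-\theta_j)$. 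So everything reduces to finitely many explicit exponentials. I abbreviate $a=\e^{-\sqrt3/2}$ and $b=\e^{1/2}$.

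At $\theta=(0,\pi/6,\pi/3)$ the weights are $W_{12}=W_{23}=a$, $W_{13}=b$, and $W_{ii}=\e$. A direct computation gives $u_1=a/2-b\sqrt3/2$ and $v_1=-a/2-b\sqrt3/2$. For the middle token the two neighbour contributions cancel, so $u_2=v_2=0$. By the reflection symmetry $\theta\mapsto\pi/3-\theta$ with the order reversed, $u_3=-u_1$ and $v_3=-v_1$. Thus
\[
\dot E_+=\frac{4\beta}{Z_1}\Bigl(\tfrac{3b^2}{4}-\tfrac{a^2}{4}\Bigr),
\qquad Z_1=Z_3=\e+a+b.
\]
This is positive because $b>1>a$.

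At $\theta=(0,\pi/6,0)$ the weights are $W_{12}=W_{23}=a$ and $W_{13}=\e$. The $(1,3)$ pair contributes nothing, because $\sin 0=0$. One finds $u_1=a/2$, $v_1=-a/2$, $u_3=a/2$, $v_3=-a/2$, $u_2=-a$, and $v_2=a$. Every product $u_iv_i$ is strictly negative, so $\dot E_+<0$ without any numerical estimate.

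The Lyapunov conclusion then follows immediately. A single fixed system with $\dot E_+$ of both signs excludes monotonicity of $E_+$ and of $-E_+$. Since that system belongs to the RoPE model class, no sign law can hold uniformly over the class. The main obstacle is purely bookkeeping: getting the signs $\sigma_{ij}$ and the orientation of $\sin(\theta_i-\theta_j)$ versus $\sin(\theta_j-\theta_i)$ right in $u_i$ and $v_i$. I would guard against sign errors by cross-checking the first witness's symmetry $u_3=-u_1$ and by verifying the sign of $\dot\theta_i$ directly from \eqref{eq:phase-flow}.
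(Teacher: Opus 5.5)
Your proposal is correct and follows the paper's proof. Both substitute the two witness states into $\dot E_+=2\beta\sum_i u_iv_i/Z_i$. Your first value $(3\e-\e^{-\sqrt3})/(\e+\e^{-\sqrt3/2}+\e^{1/2})$ is exactly the paper's closed form after multiplying numerator and denominator by $\e^{\sqrt3}$. At the second witness, your termwise values $u_iv_i\in\{-a^2/4,-a^2,-a^2/4\}$ reproduce the paper's negative closed form, and they give the sign without any numerical evaluation.
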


This proposition is energy-specific: it neither excludes a different
Lyapunov function nor claims sign-indefiniteness for every frequency.  The
exact closed forms behind \eqref{eq:energy-sign-witnesses} appear in
\cref{app:structural-proofs}.

\section{Consensus, twisted equilibria, and local linearization}
\label{sec:equilibria}

\subsection{Consensus and its position kernel}

Let $x^\star\in\Sphere^{d-1}$ and decompose it into its $r$ rotation planes,
$x^\star=(x^\star_{(1)},\ldots,x^\star_{(r)})$.  Define
\begin{equation}
a_\ell=\norm{x^\star_{(\ell)}}^2,
\qquad a_\ell\ge0,
\qquad \sum_{\ell=1}^{r}a_\ell=1.
\label{eq:plane-energies}
\end{equation}

\begin{proposition}[Consensus kernel]
\label{prop:consensus-kernel}
The consensus manifold
\[
\mathcal C=\{(x^\star,\ldots,x^\star):x^\star\in\Sphere^{d-1}\}
\]
consists of equilibria.  At such a point,
\begin{equation}
W^\star_{ij}=\exp\!\bigl(\beta K_a(p_i-p_j)\bigr),
\qquad
K_a(h)=\sum_{\ell=1}^{r}a_\ell\cos(\omega_\ell h).
\label{eq:consensus-kernel}
\end{equation}
Thus the consensus kernel depends on $x^\star$ only through the plane-energy
vector $a$, not through the within-plane phases.
\end{proposition}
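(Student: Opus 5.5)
The proof has two parts: first the equilibrium property, then a direct computation of the scores at consensus. For the first part, substitute $x_i=x^\star$ for all $i$ into \eqref{eq:sphere-flow}. Each summand is $A_{ij}(x)\bigl(x_j-\ip{x_i}{x_j}x_i\bigr)=A_{ij}(x)\bigl(x^\star-\norm{x^\star}^2x^\star\bigr)=0$, because $\norm{x^\star}=1$. The vector field therefore vanishes identically on $\mathcal C$, whatever the weights $A_{ij}$ are. No property of the attention matrix beyond finiteness is used here, and finiteness holds since $Z_i>0$.

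For the kernel formula, I compute $s_{ij}=\ip{R_{p_i}x^\star}{R_{p_j}x^\star}=\ip{x^\star}{R_{p_i}^\top R_{p_j}x^\star}$. The block rotations in \eqref{eq:rope-rotation} form a one-parameter group: each $2\times2$ block is $\rho(\omega_\ell p)$ with $\rho(\alpha)$ the planar rotation. So $R_{p_i}^\top R_{p_j}=R_{p_j-p_i}$. Because $R$ is block diagonal, the inner product splits over planes:
\[
s_{ij}=\sum_{\ell=1}^r \ip{x^\star_{(\ell)}}{\rho\bigl(\omega_\ell(p_j-p_i)\bigr)x^\star_{(\ell)}}.
\]
The key elementary fact is that for any $v\in\R^2$ and angle $\alpha$ one has $\ip{v}{\rho(\alpha)v}=\norm{v}^2\cos\alpha$. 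This follows by writing $v$ in polar form, or by noting that the antisymmetric part of $\rho(\alpha)$ contributes $\sin\alpha\,\ip{v}{Jv}=0$. Hence
\[
s_{ij}=\sum_\ell a_\ell\cos\bigl(\omega_\ell(p_j-p_i)\bigr)=K_a(p_i-p_j),
\]
using that cosine is even. Exponentiating gives \eqref{eq:consensus-kernel}.

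The final sentence of the statement is then immediate. $K_a$ involves $x^\star$ only through the squared norms $a_\ell$ of its plane components, so any in-plane rotation of each $x^\star_{(\ell)}$ leaves $W^\star$, and hence $Z^\star$ and $A^\star$, unchanged.

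No step is a genuine obstacle. The only point requiring care is the sign and orientation bookkeeping in $R_{p_i}^\top R_{p_j}$. The cross term $\ip{v}{Jv}$ vanishing is what removes the within-plane phase dependence, and evenness of cosine makes the orientation convention irrelevant. I would state both explicitly.
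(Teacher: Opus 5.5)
Your proposal is correct and follows the paper's proof: the vector field vanishes at consensus, and the plane-by-plane identity $\ip{R_{p_i}x^\star_{(\ell)}}{R_{p_j}x^\star_{(\ell)}}=\norm{x^\star_{(\ell)}}^2\cos\bigl(\omega_\ell(p_i-p_j)\bigr)$, summed over the orthogonal planes, gives the kernel. The only difference is cosmetic. You kill each summand of \eqref{eq:sphere-flow} individually, whereas the paper uses row stochasticity to get a row average equal to $x^\star$, which the projection then annihilates.
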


\subsection{An explicit twisted family on the circle}

Assume $d=2$ and consecutive integer positions $p_j=j$ for
$j=0,\ldots,n-1$.  The twisted configuration is
\begin{equation}
\theta_j=c-\omega j\pmod{2\pi}.
\label{eq:twisted-state}
\end{equation}
Its RoPE-adjusted scores are all equal, so $A_{ij}=1/n$.

\begin{proposition}[Exact twisted equilibria]
\label{prop:twisted-equilibria}
The configuration \eqref{eq:twisted-state} is an equilibrium if and only if
\begin{equation}
\omega\in\pi\Z
\qquad\text{or}\qquad
n\omega\in2\pi\Z.
\label{eq:twisted-iff}
\end{equation}
Odd multiples of $\pi$ give a bipodal configuration; even multiples give
consensus.  Outside \eqref{eq:twisted-iff}, for every fixed
$\omega\notin2\pi\Z$,
\begin{equation}
\max_i\abs{\dot\theta_i}
\le
\frac{\abs{\sin(n\omega/2)}}{n\abs{\sin(\omega/2)}}
\le\frac{1}{n\abs{\sin(\omega/2)}}
=O_\omega(n^{-1}).
\label{eq:twisted-drift}
\end{equation}
The constant is not uniform when the frequency varies with $n$ toward
$2\pi\Z$.
\end{proposition}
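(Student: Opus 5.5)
Substituting the twisted configuration \eqref{eq:twisted-state} directly into the phase flow \eqref{eq:phase-flow} gives the whole argument. With $p_j=j$ and $\theta_j=c-\omega j$, the RoPE-adjusted phase difference is $\delta_{ij}=\theta_i-\theta_j+\omega(i-j)=0$. Hence every score equals $1$, $W_{ij}=\e^{\beta}$, and $A_{ij}=1/n$ exactly, for every $\beta\ge0$. The velocity of token $i$ is then
\[
\dot\theta_i=\frac1n\sum_{j=0}^{n-1}\sin\bigl(\theta_j-\theta_i\bigr)=\frac1n\sum_{j=0}^{n-1}\sin\bigl(\omega(i-j)\bigr)
=\frac1n\,\mathrm{Im}\Bigl(\e^{\ii\omega i}\sum_{j=0}^{n-1}\e^{-\ii\omega j}\Bigr).
\]
I will evaluate the geometric sum in closed form. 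For $\omega\notin2\pi\Z$ it equals $\e^{-\ii\omega(n-1)/2}\sin(n\omega/2)/\sin(\omega/2)$, so
\[
\dot\theta_i=\frac{\sin(n\omega/2)}{n\sin(\omega/2)}\,\sin\Bigl(\omega\bigl(i-\tfrac{n-1}{2}\bigr)\Bigr).
\]
The drift bound \eqref{eq:twisted-drift} follows at once from $\abs{\sin(\cdot)}\le1$ applied twice. The non-uniformity remark is just the observation that $1/\abs{\sin(\omega/2)}$ blows up as $\omega\to2\pi\Z$.

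For the if-and-only-if \eqref{eq:twisted-iff}, sufficiency splits into two cases.
\begin{itemize}
\item If $\omega\in2\pi\Z$, all $\theta_j$ coincide, so the configuration is consensus and every sine vanishes.
\item If $\omega\notin2\pi\Z$ but $n\omega\in2\pi\Z$, the prefactor $\sin(n\omega/2)$ vanishes. This also covers odd multiples of $\pi$ when $n$ is even.
\item If $\omega$ is an odd multiple of $\pi$, every $\sin(\omega(i-j))=0$ directly. Here $\theta_j$ alternates between $c$ and $c+\pi$, which is the bipodal configuration.
\end{itemize}

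Necessity is the step needing the most care. Suppose $\omega\notin\pi\Z$ and $n\omega\notin2\pi\Z$; I must exhibit some $i$ with $\dot\theta_i\ne0$. The prefactor is nonzero, so it suffices that $\sin(\omega(i-(n-1)/2))\ne0$ for some $i\in\{0,\dots,n-1\}$. I would compare the consecutive indices $i=0$ and $i=1$ (available since $n\ge2$). If both sines vanished, then $\omega(-(n-1)/2)\in\pi\Z$ and $\omega(1-(n-1)/2)\in\pi\Z$. Subtracting gives $\omega\in\pi\Z$, a contradiction. This settles necessity.

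The main bookkeeping obstacle is not to miss cases where $\sin(\omega/2)=0$ makes the closed form invalid. I handle these separately as the consensus case above, before dividing by $\sin(\omega/2)$.
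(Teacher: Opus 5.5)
Your argument for the equivalence \eqref{eq:twisted-iff}, the bipodal/consensus dichotomy, and the drift bound \eqref{eq:twisted-drift} is correct and follows the paper's route. The paper uses the same geometric-sum closed form. Your comparison of $\sin\bigl(\omega(i-\tfrac{n-1}{2})\bigr)$ at $i=0,1$ is a repackaging of the paper's step: there, two consecutive equations $nF_i=\operatorname{Im}(\e^{\ii\omega i}G_n)=0$ form a linear system in $(\operatorname{Re}G_n,\operatorname{Im}G_n)$ with determinant $\pm\sin\omega$.

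The step that falls short is the final sentence about non-uniformity. Observing that $1/\abs{\sin(\omega/2)}$ blows up only shows that this particular upper bound degenerates. It does not show that the drift itself fails to be $O(n^{-1})$ uniformly in $\omega$. That stronger fact is what the paper proves ("no $O(n^{-1})$ bound can hold uniformly"), and it is what gives the remark content.

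A blowing-up upper bound carries no information here. Trivially $\max_i\abs{\dot\theta_i}\le1$, and for fixed $n$ the drift tends to $0$ as $\omega\to2\pi\Z$, exactly where your bound explodes. You need a witness sequence.

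Take $\omega_n=\pi/n$, which lies outside \eqref{eq:twisted-iff} for every $n\ge2$. Your closed form at $i=0$ gives
\[
\dot\theta_0=-\frac{\cos(\pi/(2n))}{n\sin(\pi/(2n))}=-\frac1n\cot\frac{\pi}{2n}\longrightarrow-\frac2\pi .
\]
Hence $n\max_i\abs{\dot\theta_i}\to\infty$, and no constant independent of $\omega$ can make the drift $O(n^{-1})$. With this addition your proof is complete.
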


\subsection{Consensus linearization}

\begin{theorem}[Tangent linearization]
\label{thm:consensus-linearization}
Let $x_i=x^\star$ be a consensus equilibrium and let
$\xi_i\in T_{x^\star}\Sphere^{d-1}$.  The tangent linearization is
\begin{equation}
\dot\xi_i=\sum_{j=1}^{n}A^\star_{ij}\xi_j-\xi_i.
\label{eq:tangent-linearization}
\end{equation}
In a common orthonormal tangent basis,
\begin{equation}
J_{\mathcal C}=(A^\star-I_n)\otimes I_{d-1}.
\label{eq:kronecker-jacobian}
\end{equation}
For the fixed linear system, the stationary-weighted tangent mean
\begin{equation}
\bar\xi_\pi=\sum_i\pi_i^\star\xi_i,
\qquad
\pi_i^\star=\frac{Z_i^\star}{\sum_kZ_k^\star},
\label{eq:linear-weighted-mean}
\end{equation}
is conserved.
\end{theorem}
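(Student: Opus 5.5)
The plan is to differentiate the ambient vector field in \eqref{eq:sphere-flow} along a tangent perturbation of the consensus point, then use the structural facts from \cref{thm:wellposed-balance-floor} to simplify. Write $F_i(x)=\sum_j A_{ij}(x)\,g_{ij}(x)$ with $g_{ij}(x)=x_j-\ip{x_i}{x_j}x_i$. This is a smooth vector field on $(\R^d)^n$ whose restriction to $(\Sphere^{d-1})^n$ is tangent. By \cref{prop:consensus-kernel}, $x^\star_{\mathcal C}=(x^\star,\ldots,x^\star)$ is a zero of $F$. At a zero, the derivative $DF(x^\star_{\mathcal C})$ maps the tangent space $\prod_i T_{x^\star}\Sphere^{d-1}$ into itself. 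It coincides with the intrinsic linearization, because the correction terms from any connection or chart are multiplied by $F(x^\star_{\mathcal C})=0$. So it suffices to compute the directional derivative of $F$ in the direction $\xi=(\xi_1,\ldots,\xi_n)$ with $\ip{\xi_i}{x^\star}=0$.

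For that derivative I use the product rule on $A_{ij}g_{ij}$. Since $g_{ij}(x^\star_{\mathcal C})=x^\star-x^\star=0$, the term involving the derivative of $A_{ij}$ drops out. This is the key simplification: no derivatives of the softmax weights are needed, and only the frozen matrix $A^\star=A(x^\star_{\mathcal C})$ enters. Differentiating $g_{ij}$ gives
\[
\xi_j-\bigl(\ip{\xi_i}{x^\star}+\ip{x^\star}{\xi_j}\bigr)x^\star-\ip{x^\star}{x^\star}\xi_i=\xi_j-\xi_i,
\]
using tangency and $\norm{x^\star}=1$. Hence $\dot\xi_i=\sum_jA^\star_{ij}(\xi_j-\xi_i)$. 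The row-sum identity $A\one=\one$ from \cref{thm:wellposed-balance-floor} turns this into \eqref{eq:tangent-linearization}. Note that the result lies again in $T_{x^\star}\Sphere^{d-1}$, consistent with invariance.

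For \eqref{eq:kronecker-jacobian}, I fix an orthonormal basis $e_1,\ldots,e_{d-1}$ of $T_{x^\star}\Sphere^{d-1}$, common to all particles because they share the base point. I write $\xi_i=\sum_k c_{ik}e_k$. The equation then reads $\dot c_{\cdot k}=(A^\star-I_n)c_{\cdot k}$ for each coordinate $k$ separately. In the ordering of the paper this is exactly $(A^\star-I_n)\otimes I_{d-1}$.

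For the conserved mean, detailed balance \eqref{eq:detailed-balance} at the consensus state gives $\sum_i\pi^\star_iA^\star_{ij}=\sum_i\pi^\star_jA^\star_{ji}=\pi^\star_j$. Therefore
\[
\frac{\mathrm d}{\mathrm dt}\bar\xi_\pi=\sum_j\Bigl(\sum_i\pi^\star_iA^\star_{ij}\Bigr)\xi_j-\sum_i\pi^\star_i\xi_i=0.
\]
The only delicate point is the justification that the ambient derivative equals the intrinsic Jacobian on the tangent bundle. I would handle it as described above, by vanishing at the equilibrium, rather than by choosing charts. Everything else is a direct computation.
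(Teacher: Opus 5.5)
Your proposal is correct and follows essentially the same route as the paper. Both arguments do a first-variation computation at consensus in which the softmax-weight variations drop out, then rewrite in a common tangent basis to get the Kronecker form, then use the detailed-balance identity $(\pi^\star)^\top A^\star=(\pi^\star)^\top$ to conserve the weighted mean. The only difference is cosmetic: the paper removes the $\delta A_{ij}$ term via $\sum_j\delta A_{ij}=0$ inside $\delta y_i$ and then differentiates the projection, whereas you remove it via $g_{ij}(x^\star)=0$ and apply $A^\star\one=\one$ at the end. Your remark that the ambient derivative equals the intrinsic linearization at a zero is a harmless addition that the paper leaves implicit.
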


The conservation in \eqref{eq:linear-weighted-mean} is only a linearized
identity.  The nonlinear quantity $\sum_i Z_i(x)x_i$ is not asserted to be
conserved.

\section{Resonant-ring spectra and consensus slowdown}
\label{sec:resonant-spectrum}

Consider $d=2$, $p_j=j$, and the exact resonance with $m\in\Z$,
\begin{equation}
\omega=\frac{2\pi m}{n},
\qquad
g=\gcd(m,n),
\qquad
L=\frac{n}{g}.
\label{eq:resonance-data}
\end{equation}
Here $L$ is the number of distinct sampled phases.  Exact resonance makes the
consensus kernel circulant; without it, a finite chain must not be replaced by
a periodic distance.

\begin{theorem}[Bessel-aliasing spectrum]
\label{thm:bessel-spectrum}
Under \eqref{eq:resonance-data}, $A^\star$ is symmetric, doubly stochastic, and
circulant.  Its Fourier eigenvalues are
\begin{equation}
\lambda_q=
\frac{
\displaystyle\sum_{\substack{k\in\Z\\mk\equiv q\ (\mathrm{mod}\ n)}}
I_k(\beta)}{
\displaystyle\sum_{\substack{k\in\Z\\mk\equiv0\ (\mathrm{mod}\ n)}}
I_k(\beta)},
\qquad q=0,\ldots,n-1,
\label{eq:bessel-alias-spectrum}
\end{equation}
where $I_k$ is the modified Bessel function of the first kind.  If $g\nmid q$,
then $\lambda_q=0$.  For $\beta>0$, exactly $L$ eigenvalues are positive and
$n-L$ vanish.  At $\beta=0$, the spectrum is $(1,0,\ldots,0)$.
\end{theorem}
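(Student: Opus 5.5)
By \cref{prop:consensus-kernel} with $d=2$ (so $r=1$, $a_1=1$), the consensus kernel is $W^\star_{ij}=\exp(\beta\cos(\omega(i-j)))$. The plan is to show this matrix is circulant, normalize it, and read off its eigenvalues from the Jacobi--Anger expansion.

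\emph{Circulant structure.} Since $\omega=2\pi m/n$, the entry $W^\star_{ij}$ depends only on $(i-j)\bmod n$. So $W^\star=\operatorname{circ}(w_0,\dots,w_{n-1})$ with $w_h=\e^{\beta\cos(2\pi mh/n)}$, and $w_h=w_{-h}$. All row sums therefore equal $Z:=\sum_h w_h$. Hence $A^\star=W^\star/Z$ is symmetric, circulant and row-stochastic, and by symmetry it is doubly stochastic.

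\emph{Eigenvalues.} The Fourier vectors $f_q=(\e^{2\pi\ii qj/n})_j$ diagonalize any circulant, with eigenvalue $\hat w_q=\sum_{h=0}^{n-1}w_h\e^{-2\pi\ii qh/n}$; symmetry of $w$ makes this real. Insert the Jacobi--Anger expansion
\[
\e^{\beta\cos\phi}=\sum_{k\in\Z}I_k(\beta)\e^{\ii k\phi}
\]
with $\phi=2\pi mh/n$. The series converges absolutely because $\sum_k|I_k(\beta)|=\e^{\beta}$, so the sum over $h$ may be swapped with the sum over $k$. The geometric sum $\sum_h\e^{2\pi\ii(mk-q)h/n}$ equals $n\,\mathbf 1[mk\equiv q \ (\mathrm{mod}\ n)]$. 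This gives
\[
\hat w_q=n\sum_{mk\equiv q}I_k(\beta).
\]
Since $Z=\hat w_0$, dividing yields \eqref{eq:bessel-alias-spectrum}.

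\emph{Vanishing and positivity.} If $g\nmid q$, the congruence $mk\equiv q\pmod n$ has no solution $k$, so $\lambda_q=0$. If $g\mid q$, the solutions form a full residue class $k\equiv k_0\pmod L$, because $m/g$ is invertible mod $L$. For $\beta>0$ we have $I_k(\beta)=I_{-k}(\beta)>0$ for every $k$. The numerator is then a nonempty sum of positive terms, so $\lambda_q>0$. Exactly $n/g=L$ values of $q\in\{0,\dots,n-1\}$ are divisible by $g$, which gives $L$ positive eigenvalues and $n-L$ zero eigenvalues. At $\beta=0$, $I_k(0)=\delta_{k0}$, so the numerator is $1$ exactly when $q\equiv0$ and is $0$ otherwise; the spectrum is $(1,0,\dots,0)$. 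The same conclusion follows directly from $A^\star=\one\one^\top/n$.

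The main obstacle is bookkeeping rather than analysis: the term swap needs absolute convergence, the residue-class counting needs care when $\gcd(m,n)>1$, and one must check that $m=0$ is covered. When $m=0$ we have $g=n$ and $L=1$; every $k$ solves the congruence for $q=0$ and none for $q\neq0$, so the spectrum is $(1,0,\dots,0)$ for all $\beta$, consistent with the count $L=1$.
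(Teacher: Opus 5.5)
Your proposal is correct and follows the paper's proof step for step. Both arguments use circulant structure from exact resonance, the Bessel generating-function expansion with an absolutely convergent interchange giving $\widehat W_q=n\sum_{mk\equiv q\ (\mathrm{mod}\ n)}I_k(\beta)$, normalization by $Z=\widehat W_0$, and the residue-class count modulo $L$ together with $I_k(\beta)>0$ for $\beta>0$. Two details that the paper leaves implicit, your justification of the interchange via $\sum_k I_k(\beta)=\e^{\beta}$ and your explicit check of the case $m=0$, are welcome additions.
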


The tangent consensus rate is
\begin{equation}
\gamma=1-\lambda_2(A^\star),
\label{eq:gap-definition}
\end{equation}
where eigenvalues are ordered non-increasingly.  Formula
\eqref{eq:bessel-alias-spectrum} handles non-coprime $m$; dropping the
congruence reachability condition gives the wrong zero-eigenvalue
multiplicity.

\begin{theorem}[Positive spectrum and strict slowdown]
\label{thm:psd-slowdown}
For the general multi-frequency consensus kernel
\eqref{eq:consensus-kernel},
\begin{equation}
\spec(A^\star)\subset[0,1],
\qquad 0<\gamma\le1.
\label{eq:positive-spectrum}
\end{equation}
If $\beta=0$, then $\gamma=1$.  If $\beta>0$, equality $\gamma=1$ holds if
and only if every active plane is invisible on the sampled positions:
\begin{equation}
a_\ell>0
\quad\Longrightarrow\quad
\omega_\ell(p_i-p_j)\in2\pi\Z
\quad\text{for all }i,j.
\label{eq:invisible-planes}
\end{equation}
Whenever at least one active plane is visible, $0<\gamma<1$.
\end{theorem}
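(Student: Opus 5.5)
The plan is to reduce everything to positive semidefiniteness of the symmetric matrix $W^\star$ from \cref{prop:consensus-kernel}, combined with a similarity transform. Write $D^\star=\diag(Z_1^\star,\ldots,Z_n^\star)$, so that $A^\star=(D^\star)^{-1}W^\star$. Then
\[
(D^\star)^{1/2}A^\star(D^\star)^{-1/2}=(D^\star)^{-1/2}W^\star(D^\star)^{-1/2}=:S,
\]
which is symmetric. Hence $A^\star$ has real spectrum, and $A^\star$ and $S$ have the same eigenvalues, so $\spec(A^\star)\ge0$ will follow from $W^\star\succeq0$. Sylvester's law of inertia also gives $\operatorname{rank}S=\operatorname{rank}W^\star$, and this will be the key link in the equality case.

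\textbf{Step 1: $W^\star\succeq0$.} The matrix $K=[K_a(p_i-p_j)]_{ij}$ decomposes as
\[
K=\sum_\ell a_\ell\bigl(c_\ell c_\ell^\top+s_\ell s_\ell^\top\bigr),
\qquad (c_\ell)_i=\cos(\omega_\ell p_i),\quad (s_\ell)_i=\sin(\omega_\ell p_i),
\]
using $\cos(u-v)=\cos u\cos v+\sin u\sin v$. Since $a_\ell\ge0$, $K\succeq0$. Next, expand entrywise:
\[
W^\star=\sum_{k\ge0}\frac{\beta^k}{k!}K^{\circ k}.
\]
Each Hadamard power $K^{\circ k}$ is PSD by the Schur product theorem (with $K^{\circ0}=\one\one^\top$), and $\beta\ge0$, so $W^\star\succeq0$.

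\textbf{Step 2: $\spec(A^\star)\subset[0,1]$ and $0<\gamma\le1$.} Step 1 together with the similarity gives nonnegative eigenvalues. Since $A^\star$ is row-stochastic with $A^\star\one=\one$ (\cref{thm:wellposed-balance-floor}), every eigenvalue has modulus at most $1$, and $1$ is an eigenvalue. Every entry of $A^\star$ is strictly positive (indeed at least $\abar$). Perron--Frobenius then makes the eigenvalue $1$ simple, so $\lambda_2<1$ and $\gamma>0$. Because $n\ge2$, $\lambda_2$ exists and is at least $0$, so $\gamma\le1$.

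\textbf{Step 3: the equality case.} Since $\lambda_1=1$ is simple and all other eigenvalues are nonnegative, $\gamma=1$ holds iff $\lambda_2=0$. Because $S$ is symmetric, this is iff $\operatorname{rank}S=1$, and hence iff $\operatorname{rank}W^\star=1$.
\begin{itemize}
\item If $\beta=0$, then $W^\star=\one\one^\top$, which has rank one, so $\gamma=1$.
\item If $\beta>0$ and $W^\star$ has rank one, write $W^\star=uu^\top$, using that it is PSD. The diagonal gives $u_i^2=\e^{\beta K_a(0)}=\e^\beta$ because $\sum_\ell a_\ell=1$. The off-diagonal entries are positive, so all $u_i$ have the same sign, and therefore every entry equals $\e^\beta$. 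This forces $K_a(p_i-p_j)=1$ for all $i,j$. Since $K_a$ is an $a$-weighted average of cosines, each bounded by $1$, this holds iff $\cos(\omega_\ell(p_i-p_j))=1$ whenever $a_\ell>0$. That is exactly \eqref{eq:invisible-planes}.
\item Conversely, \eqref{eq:invisible-planes} gives $W^\star=\e^\beta\one\one^\top$, hence $A^\star=\one\one^\top/n$, and $\gamma=1$.
\end{itemize}
If some active plane is visible, the rank of $W^\star$ is at least two. Then $\lambda_2>0$, and together with Step 2 this gives $0<\gamma<1$.

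The main obstacle is Step 1, specifically recognizing that the entrywise exponential preserves positive semidefiniteness via the Schur product theorem; once that is in hand, the remaining steps are routine. A secondary point of care is the rank transfer in Step 3: the conclusion must be phrased as rank of $S$ equals rank of $W^\star$ through the congruence, not read off directly from the non-symmetric $A^\star$.
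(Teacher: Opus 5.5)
Your proof is correct and follows the paper's argument essentially step for step: the Gram (outer-product) representation of $K$, the Schur product theorem applied to the entrywise exponential, the symmetric similarity $S=D^{-1/2}W^\star D^{-1/2}$, Perron--Frobenius for simplicity of the eigenvalue one, and the reduction of $\gamma=1$ to $\operatorname{rank}W^\star=1$. The only differences are cosmetic: you obtain $K_{ij}=1$ from a factorization $W^\star=uu^\top$ rather than from vanishing $2\times2$ principal minors, and you spell out the rank transfer from $S$ to $W^\star$, which the paper leaves implicit.
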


The proof writes $K_a$ as a Gram matrix and applies the Schur product theorem
to its entrywise exponential.  The row-normalized matrix is similar to
$D^{-1/2}W^\star D^{-1/2}\succeq0$.  Condition
\eqref{eq:invisible-planes}, rather than ``nonzero frequency,'' is the exact
strictness criterion.

\begin{theorem}[Fixed-effective-period large-$\beta$ asymptotics]
\label{thm:finite-ring-asymptotics}
Fix the effective period $L$ in \eqref{eq:resonance-data}.
\begin{enumerate}[label=(\roman*),leftmargin=2em]
  \item If $L=1$, the kernel is constant and $\gamma(\beta)=1$.
  \item If $L=2$, then
  \begin{equation}
  \gamma(\beta)=1-\tanh\beta
  =\frac{2\e^{-2\beta}}{1+\e^{-2\beta}}
  \sim2\e^{-2\beta}.
  \label{eq:antialigned-gap}
  \end{equation}
  \item If $L\ge3$, let $\Delta_L=1-\cos(2\pi/L)$.  As
  $\beta\to\infty$,
  \begin{equation}
  \gamma(\beta)\sim2\Delta_L\e^{-\beta\Delta_L}.
  \label{eq:fixed-ring-asymptotic}
  \end{equation}
\end{enumerate}
\end{theorem}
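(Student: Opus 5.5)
The plan is to bypass the infinite Bessel sums of \cref{thm:bessel-spectrum} and use the equivalent finite-sum form of the resonant circulant spectrum. The asymptotics then reduce to comparing finitely many exponentials.

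\textbf{Reduction to a finite sum.} Under \eqref{eq:resonance-data} write $m=gm'$ with $\gcd(m',L)=1$. Then $W^\star_{ij}=\exp(\beta\cos(2\pi m'(i-j)/L))$ depends only on $(i-j)\bmod L$. So $W^\star$ is the $g$-fold periodic blow-up of an $L\times L$ circulant, and $A^\star=W^\star/Z$ with the constant degree $Z=g\,S_0$.

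Computing the circulant eigenvalues directly, and reindexing $k\mapsto m'k$ (a bijection of $\Z/L$ because $\gcd(m',L)=1$), I expect
\[
\lambda_{gq'}=\frac{S_{q'}}{S_0},\qquad S_{q'}=\sum_{k=0}^{L-1}\e^{\beta c_k}\cos\!\Bigl(\frac{2\pi q'k}{L}\Bigr),\qquad c_k=\cos\!\Bigl(\frac{2\pi k}{L}\Bigr),
\]
for $q'=0,\ldots,L-1$. All other eigenvalues vanish, consistent with \cref{thm:bessel-spectrum}. By \cref{thm:psd-slowdown} the spectrum lies in $[0,1]$, so the zero eigenvalues contribute $1-0=1$ to the gap. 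Hence
\[
\gamma=\min\Bigl(1,\ \min_{1\le q'\le L-1}\bigl(1-S_{q'}/S_0\bigr)\Bigr).
\]

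\textbf{Cases (i) and (ii).}
\begin{itemize}[leftmargin=2em]
\item If $L=1$, every phase coincides, so $W^\star$ is constant, $A^\star=\one\one^\top/n$, and the spectrum is $(1,0,\ldots,0)$. Hence $\gamma=1$.
\item If $L=2$, then $S_0=\e^\beta+\e^{-\beta}$ and $S_1=\e^\beta-\e^{-\beta}$. So the nontrivial eigenvalue is $\tanh\beta<1$, giving $\gamma=1-\tanh\beta$. The algebraic rewriting in \eqref{eq:antialigned-gap} is immediate.
\end{itemize}

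\textbf{Case (iii), $L\ge3$.} Write
\[
1-\frac{S_{q'}}{S_0}=\frac{1}{S_0}\sum_{k=0}^{L-1}\e^{\beta c_k}\Bigl(1-\cos\frac{2\pi q'k}{L}\Bigr).
\]
\begin{itemize}[leftmargin=2em]
\item The $k=0$ term vanishes.
\item The two terms $k=\pm1$ have exponent $\beta(1-\Delta_L)$.
\item Every remaining $k$ has $c_k<1-\Delta_L$, so those terms are exponentially smaller.
\end{itemize}
Since $S_0=\e^\beta(1+O(\e^{-\beta\Delta_L}))$, this gives
\[
1-\lambda_{gq'}=2\Bigl(1-\cos\frac{2\pi q'}{L}\Bigr)\e^{-\beta\Delta_L}\bigl(1+o(1)\bigr).
\]
The bracket is strictly positive for $1\le q'\le L-1$ and is minimized at $q'=1$ and $q'=L-1$, where it equals $\Delta_L$.

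There are finitely many $q'$, so the $o(1)$ is uniform in $q'$. Every nonminimal $q'$ yields a strictly larger leading constant, so the minimum over $q'$ is asymptotic to $2\Delta_L\e^{-\beta\Delta_L}$. This quantity is eventually below $1$, so the zero eigenvalues do not affect the minimum. This proves \eqref{eq:fixed-ring-asymptotic}.

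\textbf{Main obstacle.} The step needing the most care is the identification of $\lambda_2$. One must confirm that the second-largest eigenvalue is the $q'=\pm1$ mode and not one of the aliased zero modes. One must also check that the tie among the degenerate $k=\pm1$ terms, including the case $L=3$ where $k=1$ and $k=2$ are exactly the $\pm1$ pair, is counted correctly to produce the factor $2$. Both points are settled by the explicit leading-order comparison above.
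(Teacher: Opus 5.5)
Your proposal is correct and follows essentially the same route as the paper. Both reduce the reachable spectrum to the $L$-point phase-grid circulant, write each non-Perron gap as a normalized cosine-weighted sum of $\e^{\beta c_k}$, and compare finitely many exponentials: the $k=\pm1$ pair supplies the factor $2$, and $q'=\pm1$ minimizes $1-\cos(2\pi q'/L)$. Your explicit treatment of the $n-L$ aliased zero modes is a small point the paper leaves implicit.

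One cosmetic slip: after the reindexing $k\mapsto m'k$, the eigenvalue at Fourier mode $gq'$ is $S_{\bar m'q'}/S_0$, where $\bar m'$ is the inverse of $m'$ modulo $L$, not $S_{q'}/S_0$. Since $q'\mapsto\bar m'q'$ permutes $\Z/L$, the multiset of eigenvalues, and hence $\gamma$, is unaffected; this is exactly the paper's ``up to a permutation.''
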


Figure~\ref{fig:resonant-spectrum-slowdown} separates three facts that are
easy to conflate in a gap-only calculation: the exact Fourier-mode
reachability pattern, the absolute fixed-ring slowdown, and the leading
asymptotic prefactor.  All plotted finite-ring gaps use the nonnegative-sum
Fourier formula, so the smallest values are not formed by subtracting two
nearby eigenvalues.

\begin{figure}[t]
\centering
\includegraphics[width=\textwidth]{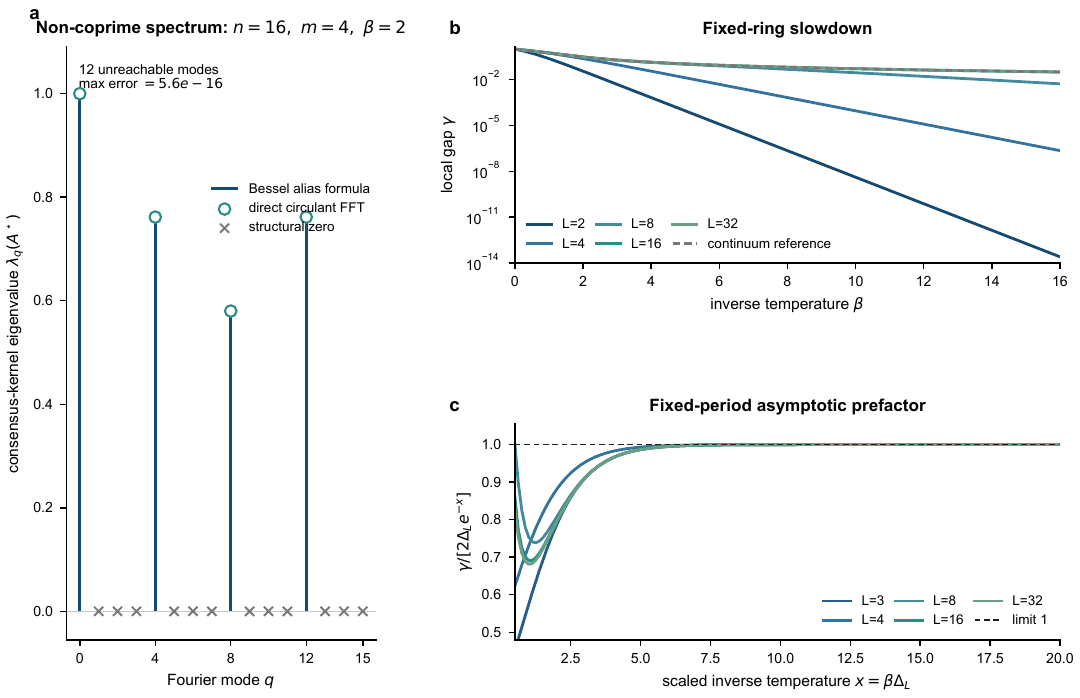}
\caption{\textbf{Exact resonant spectrum and fixed-ring slowdown.}
\textbf{a}, Complete Fourier-labelled spectrum for
$n=16$, $m=4$, and $\beta=2$.  Blue stems are the
congruence-filtered Bessel formula, open teal circles are the direct FFT of
the exact circulant attention row, and grey crosses mark the $12$ modes
excluded by the exact $\gcd(m,n)$ reachability mask.
\textbf{b}, Cancellation-free finite-ring gaps for the displayed effective
periods $L$; the dashed continuum expression $1-I_1(\beta)/I_0(\beta)$ is a
reference only and is not a proved joint-limit approximation here.
\textbf{c}, Fixed-$L$ ratios
$\gamma/[2\Delta_L\exp(-\beta\Delta_L)]$ against
$\beta\Delta_L$ for $L\ge3$, with the unit asymptotic limit dashed.  Each
curve is an exact finite-ring computation and does not assert a bound uniform
in $L$.  All quantities are deterministic, so no error bars are applicable.}
\label{fig:resonant-spectrum-slowdown}
\end{figure}

The continuum expression $1-I_1(\beta)/I_0(\beta)\sim1/(2\beta)$ is not the
fixed-$L$ asymptotic.  Recovering it in a joint limit
$\beta,L\to\infty$ with $L/\sqrt\beta\to\infty$ requires a uniform aliasing
error bound.  That estimate is not established here, so the joint-limit claim
is not stated as a theorem.

\section{Global contraction in invariant geometric regions}
\label{sec:global-convergence}

Throughout this section, positions and frequencies are arbitrary and fixed,
and $\abar$ denotes \eqref{eq:sharp-floor}.  The conclusions are independent
of resonance.  The proofs use only row stochasticity and the uniform lower
bound $A_{ij}\ge\abar$; they are therefore a kernel-generic contraction
principle instantiated with the sharp RoPE floor.  Earlier sphere-consensus
theory gives broader qualitative synchronization results under other graph and
interaction assumptions
\citep{thunberg2018lifting,criscitiello2024synchronization}.  The narrower
regions below supply explicit state-uniform constants and tail bounds for the
present attention kernel.

\begin{proposition}[Fixed closed hemispheres are invariant]
\label{prop:hemisphere-invariance}
Fix $w\in\Sphere^{d-1}$ and define
\begin{equation}
h_i(t)=\ip{x_i(t)}{w},
\qquad m_w(t)=\min_i h_i(t).
\label{eq:hemisphere-margin}
\end{equation}
If $m_w(0)\ge0$, then $m_w(t)\ge m_w(0)\ge0$ for all $t\ge0$.
Thus both the fixed closed hemisphere and its interior are forward invariant.
This conclusion alone does not imply consensus.
\end{proposition}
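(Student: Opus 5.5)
The plan is to compute the derivative of each $h_i$ along the flow \eqref{eq:sphere-flow}, bound it from below at a minimizing index, and then close the argument with an upper Dini derivative comparison for the minimum $m_w$, which is only Lipschitz. Global existence comes from \cref{thm:wellposed-balance-floor}, so all quantities are defined for $t\ge0$.

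\textbf{Step 1 (derivative of $h_i$).} Differentiating $h_i=\ip{x_i}{w}$ along \eqref{eq:sphere-flow} gives
\[
\dot h_i=\sum_j A_{ij}\bigl(h_j-\ip{x_i}{x_j}h_i\bigr).
\]
Let $i$ be an index with $h_i(t)=m_w(t)$, and suppose $m_w(t)\ge0$. Since $\abs{\ip{x_i}{x_j}}\le1$ and $h_i\ge0$, we have $\ip{x_i}{x_j}h_i\le h_i$. Hence, using $A\one=\one$ and $A_{ij}\ge0$,
\[
\dot h_i\ge\sum_jA_{ij}(h_j-h_i)\ge0,
\]
because $h_j\ge h_i$ for every $j$. (The sharper bound $\dot h_i\ge \abar\sum_j(h_j-h_i)$ is available from the floor \eqref{eq:sharp-floor}, but it is not needed here.)

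\textbf{Step 2 (Dini comparison).} The minimum of finitely many $C^1$ functions satisfies
\[
\Dini m_w(t)=\min\{\dot h_i(t): h_i(t)=m_w(t)\}.
\]
By Step 1 this gives $\Dini m_w(t)\ge0$ whenever $m_w(t)\ge0$. The main obstacle is that this inequality is only valid on the set where $m_w\ge0$, so we cannot apply monotonicity directly; we must first rule out the margin crossing below zero.

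To handle this, suppose $m_w(t_1)<m_w(0)$ for some $t_1>0$, and let
\[
t_0=\sup\{t\le t_1: m_w(t)\ge m_w(0)\}.
\]
Fix $\varepsilon>0$. Near $t_0$, continuity keeps $m_w$ close to $m_w(0)$.

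\emph{Case $m_w(0)>0$.} Then $m_w$ remains positive on a neighbourhood of $t_0$, so $\Dini m_w\ge0$ there. A nonincreasing-violating drop on $(t_0,t_1]$ then contradicts the standard lemma that a continuous function with nonnegative upper Dini derivative on an interval is nondecreasing.

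\emph{Case $m_w(0)=0$.} Here the positivity region may be lost immediately, so we need a quantitative bound. Using $\abs{\ip{x_i}{x_j}h_i}\le\abs{h_i}$, the estimate of Step 1 extends to give, at any minimizing index,
\[
\dot h_i\ge\sum_jA_{ij}(h_j-h_i)-2\max(0,-h_i)\ge -2\max(0,-m_w).
\]
This yields $\Dini m_w\ge -2\max(0,-m_w)$ everywhere. Applying the comparison lemma to $\min(m_w,0)$ starting from $0$ shows that $\min(m_w,0)\equiv0$, which again contradicts $m_w(t_1)<0$.

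\textbf{Step 3 (invariance of the interior).} The interior statement is the case $m_w(0)>0$: the inequality $m_w(t)\ge m_w(0)>0$ keeps every token strictly inside the open hemisphere for all $t\ge0$.
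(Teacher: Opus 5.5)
Your proof is correct and follows essentially the paper's argument. It uses the same formula for $\dot h_i$, the active-set Dini derivative of the minimum, the global inequality $\Dini m_w\ge 2\min\{m_w,0\}$, and a Gr\"onwall/comparison step on the negative part. The only difference is organizational: you handle $m_w(0)>0$ by a separate first-drop-time argument, whereas the paper first shows $m_w(t)\ge0$ for all $t$ uniformly (Gr\"onwall on $m_-$) and then gets monotonicity from the nonnegative active derivatives.
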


\begin{theorem}[Pairwise non-obtuse contraction]
\label{thm:acute-contraction}
Let
\begin{equation}
c(t)=\min_{i<j}\ip{x_i(t)}{x_j(t)}.
\label{eq:minimum-inner-product}
\end{equation}
If $c(0)=c_0\ge0$, then $c(t)\ge0$ and
\begin{equation}
\Dini c(t)\ge2\abar\bigl(1-c(t)^2\bigr).
\label{eq:acute-dini}
\end{equation}
With
\begin{equation}
\rho(t)=\frac{1-c(t)}{1+c(t)},
\qquad
\rho_0=\frac{1-c_0}{1+c_0},
\label{eq:rho-def}
\end{equation}
one has
\begin{equation}
\rho(t)\le\rho_0\e^{-4\abar t}.
\label{eq:rho-decay}
\end{equation}
Equivalently, for the pairwise geodesic diameter
$D_{\Sphere}(t)=\arccos c(t)$,
\begin{equation}
\tan\frac{D_{\Sphere}(t)}2
\le
\tan\frac{D_{\Sphere}(0)}2\,\e^{-2\abar t}.
\label{eq:acute-half-angle}
\end{equation}
If $c_0=0$, then $c(t)\ge\tanh(2\abar t)>0$ for every $t>0$.
\end{theorem}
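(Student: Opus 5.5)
The plan is to differentiate pairwise inner products along \eqref{eq:sphere-flow} and evaluate the result at a minimizing pair. This uses only row stochasticity and the sharp floor \eqref{eq:sharp-floor} of \cref{thm:wellposed-balance-floor}.

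\textbf{Derivative of a pair.} Write $c_{ij}=\ip{x_i}{x_j}$. Then
\[
\tfrac{\mathrm d}{\mathrm dt}c_{ij}
=\sum_k A_{ik}\bigl(c_{kj}-c_{ik}c_{ij}\bigr)+\sum_k A_{jk}\bigl(c_{ki}-c_{jk}c_{ij}\bigr).
\]

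\textbf{Lower bound at a minimizing pair.} Fix $t$ and a pair $(i,j)$ with $c_{ij}=c(t)$, and suppose $c\ge0$. In the first sum, the terms are as follows.
\begin{itemize}
\item $k=j$ gives $1-c^2$.
\item $k=i$ gives $c-c=0$.
\item Every other $k$ gives $c_{kj}-c_{ik}c\ge c-c_{ik}c=c(1-c_{ik})\ge0$, using $c_{kj}\ge c$, $c\ge0$ and $c_{ik}\le1$.
\end{itemize}
Hence the first sum is at least $A_{ij}(1-c^2)\ge\abar(1-c^2)$. The symmetric argument bounds the second sum by $\abar(1-c^2)$. So every active pair has derivative at least $2\abar(1-c^2)$.

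\textbf{Passing to the minimum.} Since $c$ is the minimum of finitely many $C^1$ functions, its right derivative exists. It equals the minimum of the derivatives over the pairs active at $t$. This gives \eqref{eq:acute-dini} wherever $c(t)\ge0$.

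\textbf{Invariance of $c\ge0$.} This is the main obstacle, because the estimate itself uses $c\ge0$. Let $t^*=\sup\{t: c\ge0 \text{ on }[0,t]\}$ and suppose $t^*<\infty$. By continuity $c(t^*)=0$. At $t^*$ every active pair has derivative at least $2\abar>0$, so the right derivative of $c$ at $t^*$ is positive. Hence $c>0$ on some interval $(t^*,t^*+\epsilon)$, contradicting the definition of $t^*$. Thus $c(t)\ge0$ for all $t\ge0$ and \eqref{eq:acute-dini} holds globally.

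\textbf{Decay of $\rho$.} Since $\rho=(1-c)/(1+c)$ is a decreasing $C^1$ function of $c$, the chain rule for Dini derivatives gives
\[
\Dini\rho\le-\frac{2}{(1+c)^2}\,2\abar(1-c^2)=-4\abar\rho .
\]
A Dini-derivative Grönwall comparison then yields \eqref{eq:rho-decay}.

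\textbf{Half-angle form.} With $c=\cos D_{\Sphere}$ we have $\tan^2(D_{\Sphere}/2)=(1-c)/(1+c)=\rho$. Taking square roots of \eqref{eq:rho-decay} gives \eqref{eq:acute-half-angle}.

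\textbf{The case $c_0=0$.} Here $\rho_0=1$. Inverting $\rho\le\e^{-4\abar t}$ gives
\[
c\ge\frac{1-\e^{-4\abar t}}{1+\e^{-4\abar t}}=\tanh(2\abar t),
\]
which is strictly positive for every $t>0$.
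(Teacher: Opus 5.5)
Your proof is correct and follows the paper's argument: the same pair-derivative identity, the same retention of the $k=j$ and $k=i$ terms to get $(A_{ij}+A_{ji})(1-c^2)\ge2\abar(1-c^2)$ at an active pair, and the same $\rho$-Gr\"onwall step and half-angle identity. The one difference is the invariance of $c\ge0$. You use a first-exit argument that relies on the strictly positive drift $2\abar$ at $c=0$, whereas the paper proves $\Dini c\ge4c$ for $c<0$ and applies Gr\"onwall to $c_-$. Both are valid; yours is shorter, while the paper's template also covers the hemisphere case, where the boundary drift is only nonnegative.
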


\begin{theorem}[Strict open-semicircle contraction]
\label{thm:semicircle-contraction}
Assume $d=2$ and choose continuous lifts $\theta_i(t)\in\R$ of the phases.
If
\begin{equation}
D_0=\max_i\theta_i(0)-\min_i\theta_i(0)<\pi,
\label{eq:strict-semicircle}
\end{equation}
then the lifted diameter remains below $\pi$ and satisfies
\begin{equation}
\Dini D(t)\le-2\abar\sin D(t),
\qquad
\tan\frac{D(t)}2\le\tan\frac{D_0}2\,\e^{-2\abar t}.
\label{eq:semicircle-bound}
\end{equation}
\end{theorem}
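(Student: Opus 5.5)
The argument runs on the phase equation \eqref{eq:phase-flow}, $\dot\theta_i=\sum_jA_{ij}\sin(\theta_j-\theta_i)$, and uses only row stochasticity together with the floor $A_{ij}\ge\abar$ of \cref{thm:wellposed-balance-floor}. Write $M(t)=\max_i\theta_i(t)$, $\mu(t)=\min_i\theta_i(t)$ and $D=M-\mu$. The lifts are continuous and, by well-posedness, $C^1$, so $M$ and $\mu$ are maxima and minima of finitely many $C^1$ functions. Standard Danskin-type reasoning then gives $\Dini M(t)=\max_{i\in I_M(t)}\dot\theta_i$, where $I_M$ is the active index set, and similarly $D_-\mu(t)=\min_{i\in I_\mu(t)}\dot\theta_i$. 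Hence
\[
\Dini D(t)\le\max_{i\in I_M}\dot\theta_i-\min_{k\in I_\mu}\dot\theta_k .
\]

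\textbf{Step 1: the one-sided estimate while $D<\pi$.} Fix an active maximizer $i$. Every $j$ has $\theta_j-\theta_i\in[-D,0]\subset(-\pi,0]$, so each term $\sin(\theta_j-\theta_i)\le0$. Dropping all terms except $j=k$, where $k$ is an active minimizer, gives
\[
\dot\theta_i\le A_{ik}\sin(\theta_k-\theta_i)=-A_{ik}\sin D\le-\abar\sin D ,
\]
since $\sin D\ge0$ and $A_{ik}\ge\abar$. By the symmetric argument, $\dot\theta_k\ge\abar\sin D$ for every active minimizer $k$. Subtracting yields $\Dini D\le-2\abar\sin D$ whenever $D(t)\in[0,\pi)$. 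The degenerate case $D=0$ is consistent, since consensus is an equilibrium and uniqueness keeps $D\equiv0$.

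\textbf{Step 2: invariance of $D<\pi$.} Let $T=\sup\{t:D<\pi\text{ on }[0,t]\}$, and suppose $T<\infty$. On $[0,T)$ the Dini inequality gives $\Dini D\le0$. A function with nonpositive upper Dini derivative is nonincreasing, so $D\le D_0<\pi$ on $[0,T)$. Continuity then gives $D(T)\le D_0<\pi$, which contradicts the definition of $T$. Hence $D(t)\le D_0<\pi$ for all $t$.

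\textbf{Step 3: integrating to the half-angle bound.} Let $F(D)=\log\tan(D/2)$ on $(0,\pi)$. Its derivative is $F'(D)=1/\sin D>0$, so by the chain rule for Dini derivatives composed with an increasing $C^1$ map, $\Dini F(D)\le F'(D)\,\Dini D\le-2\abar$. A Dini comparison lemma then gives $\tan(D(t)/2)\le\tan(D_0/2)\,\e^{-2\abar t}$. If $D$ reaches $0$ at some time, it stays $0$ by uniqueness and the bound holds trivially; if $D_0=0$ it is immediate.

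\textbf{Main obstacle.} The delicate point is the Dini calculus, not the geometry. It requires justifying the active-set formula for the upper Dini derivative of a max minus a min, and the composition rule with $\log\tan(\cdot/2)$. I would handle both through the elementary lemma that a continuous $f$ with $\Dini f\le g(f)$, for $g$ locally Lipschitz, stays below the solution of the corresponding ODE, which avoids any differentiability assumptions on $D$.
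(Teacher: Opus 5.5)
Your proposal is correct and follows essentially the same route as the paper. Both arguments use the active-extremum estimate $\dot\theta_i\le -A_{ik}\sin D$ (and its mirror at the minimizer) together with the finite active-set Dini formula and the floor $\abar$, the maximal-interval argument that keeps $D<\pi$, and integration of the half-angle variable. The only cosmetic difference is that you integrate $\log\tan(D/2)$ via the Dini chain rule and treat $D=0$ separately, whereas the paper differentiates $q=\tan(D/2)$ almost everywhere and applies Gr\"onwall, which absorbs the degenerate case automatically.
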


\begin{proposition}[Bipodal boundary counterexample]
\label{prop:bipodal-boundary}
Let $u\in\Sphere^{d-1}$ and choose signs $\sigma_i\in\{-1,1\}$ with both
signs present.  Then $x_i=\sigma_i u$ is a stationary non-consensus
configuration for every finite $\beta$, every position set, and every frequency
spectrum.  In $d=2$ this has lifted diameter $\pi$, so the strict condition in
\cref{thm:semicircle-contraction} cannot be weakened to $D_0\le\pi$.
\end{proposition}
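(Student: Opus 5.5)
Since $x_i=\sigma_i u$ with $\sigma_i\in\{-1,1\}$, I will substitute directly into \eqref{eq:sphere-flow} and show that every summand vanishes. For any $i,j$ we have $x_j=\sigma_j u$ and $\ip{x_i}{x_j}=\sigma_i\sigma_j$, because $\norm{u}=1$. Hence
\[
x_j-\ip{x_i}{x_j}x_i=\sigma_j u-\sigma_i\sigma_j\sigma_i u=\sigma_j u-\sigma_j u=0,
\]
using $\sigma_i^2=1$. The velocity $\dot x_i=\sum_j A_{ij}(x)\bigl(x_j-\ip{x_i}{x_j}x_i\bigr)$ is therefore a sum of zero vectors, whatever the weights $A_{ij}(x)$ are. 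The weights are finite and well defined for every finite $\beta$, every position set, and every frequency vector, by \eqref{eq:attention-def}. So the configuration is stationary, and by the uniqueness part of \cref{thm:wellposed-balance-floor} the solution started there stays there for all time. Equivalently, every $x_j$ lies in $\operatorname{span}(x_i)$, so the projection $\proj{x_i}$ annihilates $\sum_jA_{ij}x_j$. This also shows that the RoPE rotations enter only through the weights, which are irrelevant here.

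Next, non-consensus. Both signs are present, so some $i,j$ satisfy $x_i=u$ and $x_j=-u\neq u$. The configuration is therefore not in $\mathcal C$, and since it is constant in time it never converges to $\mathcal C$.

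For the sharpness claim, take $d=2$ and $u=(\cos\phi,\sin\phi)$. I will choose lifts $\theta_i=\phi$ when $\sigma_i=1$ and $\theta_i=\phi+\pi$ when $\sigma_i=-1$. These lifts are constant continuous lifts, so $D(t)=\max_i\theta_i-\min_i\theta_i=\pi$ for all $t$. If the conclusion of \cref{thm:semicircle-contraction} held under $D_0\le\pi$, it would force $\tan(D(t)/2)\to0$, or at least $D(t)<\pi$ for $t>0$, which contradicts $D(t)\equiv\pi$.

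One subtlety needs checking: the lifted diameter of this configuration must equal $\pi$ for every admissible choice of lifts, not only for mine. Any other lift differs by multiples of $2\pi$, which can only increase the spread or give a different but still non-contracting value. I will phrase the counterexample using the lifts above, which satisfy $D_0=\pi$, and that suffices to show the hypothesis $D_0<\pi$ cannot be relaxed. There is no real obstacle in this proof; the only care needed is this lifting convention.
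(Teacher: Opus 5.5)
Your proof is correct and follows essentially the paper's argument: the paper notes that the row average $\sum_j A_{ij}x_j=(\sum_j A_{ij}\sigma_j)u$ is parallel to $x_i$, so its tangent projection vanishes (your termwise cancellation is the same fact), and it reads off lifted diameter $\pi$ from the two antipodal phases. One small precision: at $D_0=\pi$ the half-angle bound is vacuous, since $\tan(\pi/2)$ is infinite, and $D\equiv\pi$ even satisfies $\Dini D\le-2\abar\sin D$, so the sharpness claim rests on your stationarity-plus-non-consensus observation rather than on any forced decay of $\tan(D(t)/2)$.
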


The normalized comparisons in
\cref{fig:regional-contraction} use the observable appearing in the proofs
rather than fitting an unconstrained exponential.  They show both the
conservatism of the state-uniform floor and the sharp failure at the antipodal
boundary.

\begin{figure}[t]
\centering
\includegraphics[width=\textwidth]{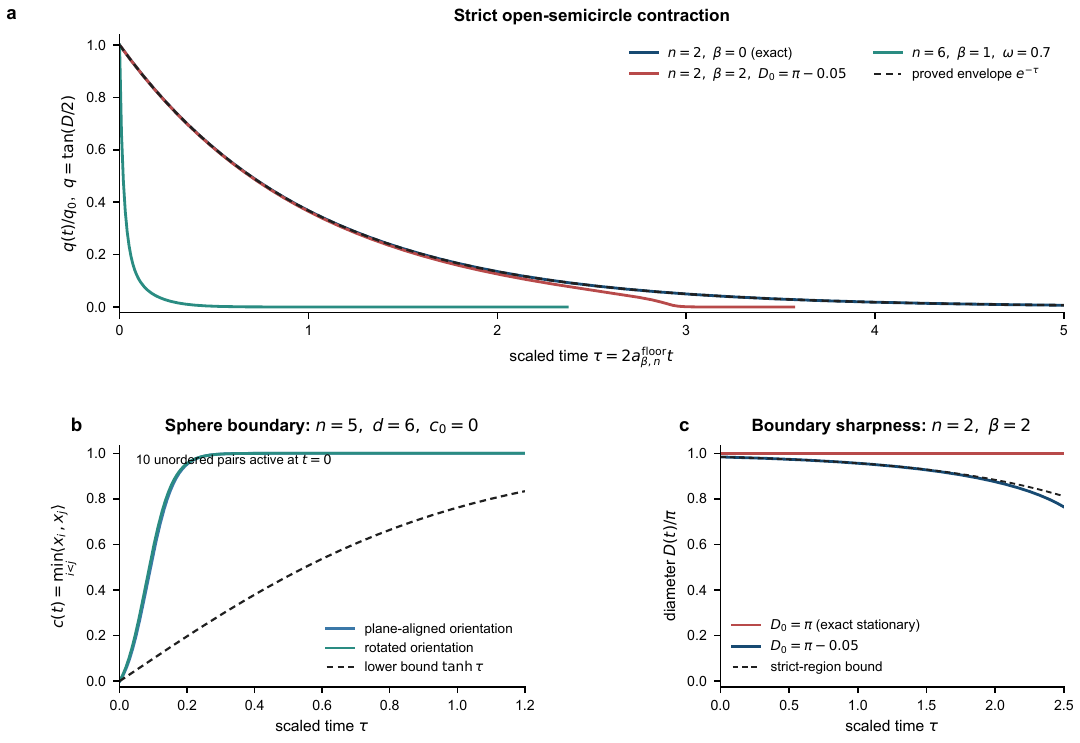}
\caption{\textbf{Regional contraction and its sharp boundary.}
\textbf{a}, Deterministic circle trajectories plotted as
$q(t)/q_0$, where $q=\tan(D/2)$, against
$\tau=2\abar t$.  The dashed curve is the proved envelope
$\exp(-\tau)$; the $\beta=0$, $n=2$ case is exact.  Numerical curves are
stopped after $q(t)/q_0<10^{-10}$ to avoid displaying the floating-point
noise floor.
\textbf{b}, Minimum pairwise inner product for two fixed orientations of
five initially orthogonal tokens in $d=6$, with $\beta=1$ and the standard
three-plane RoPE frequencies.  All ten unordered pairs are active at
$c_0=0$; both step-halved trajectories lie above the dashed lower bound
$\tanh\tau$.
\textbf{c}, The exact bipodal state with $D_0=\pi$ remains stationary,
whereas the strict case $D_0=\pi-0.05$ contracts below its comparison
solution for $n=2$, $\beta=2$, and $\omega=0$.  These are deterministic ODE
computations, not statistical samples; no error bars are applicable.}
\label{fig:regional-contraction}
\end{figure}

\begin{corollary}[Single-point limit and explicit tail]
\label{cor:single-point-tail}
Under the assumptions of either \cref{thm:acute-contraction} or
\cref{thm:semicircle-contraction}, all tokens converge to a common point
$x_\infty\in\Sphere^{d-1}$.  Let $D(t)$ be the corresponding maximal geodesic
diameter and set
\begin{equation*}
q_0=\tan\frac{D(0)}2,
\qquad Q(t)=q_0\e^{-2\abar t}.
\end{equation*}
Then
\begin{equation}
\distS(x_i(t),x_\infty)
\le\frac{1}{\abar}\operatorname{arsinh}Q(t)
\le\frac{q_0}{\abar}\e^{-2\abar t}.
\label{eq:geodesic-tail}
\end{equation}
\end{corollary}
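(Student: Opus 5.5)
The plan is to avoid integrating token speeds. Instead I will show that all tokens stay inside a nested family of compact regions whose diameter is controlled by the half-angle bounds \eqref{eq:acute-half-angle} and \eqref{eq:semicircle-bound}. Both the single-point limit and the tail bound then follow from comparing arctangent, inverse hyperbolic sine, and the floor $\abar$.

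\emph{Step 1: nested hulls.} For $s\ge0$ let $H(s)$ be the intersection of all closed hemispheres $\{y:\ip{y}{w}\ge0\}$ that contain every $x_i(s)$; this is the spherical convex hull of the configuration. By \cref{prop:hemisphere-invariance}, each such hemisphere still contains every $x_i(t)$ for $t\ge s$, so $x_i(t)\in H(s)$ and therefore $H(t)\subseteq H(s)$. In the semicircle case ($d=2$) the same conclusion is more transparent. At a maximizing index the velocity $\dot\theta_i=\sum_jA_{ij}\sin(\theta_j-\theta_i)$ is $\le0$, because all lifted differences lie in $(-\pi,0]$ while $D<\pi$. Hence $\max_i\theta_i$ is Dini-nonincreasing and, symmetrically, $\min_i\theta_i$ is Dini-nondecreasing. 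The lifted arcs $[\min\theta(t),\max\theta(t)]$ are thus nested, and their length $D(t)$ stays below $\pi$ by \cref{thm:semicircle-contraction}.

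\emph{Step 2: diameter of the hull.} In the non-obtuse case all points lie in a closed hemisphere centred at any of them, since $c\ge0$ by \cref{thm:acute-contraction}. For a set contained in an open hemisphere, the geodesic diameter of its spherical convex hull equals the maximal pairwise distance $D(t)$. I expect this is the one step needing genuine care: the hemisphere is only closed when $c=0$. To handle it I will use that $c(t)>0$ for $t>0$ by the $\tanh$ statement. Then, for $c>0$, I will argue that $y\mapsto\distS(y,z)$ is quasi-convex along geodesics inside the open hemisphere, so the maximum over the hull is attained at vertices. Either way $\operatorname{diam}H(t)=D(t)\to0$ by the half-angle decay. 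The sets $H(t)$ are nested nonempty compacts with vanishing diameter, so their intersection is a single point $x_\infty$, and $x_i(t)\to x_\infty$.

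\emph{Step 3: the tail.} Since $x_i(t),x_\infty\in H(t)$, we get $\distS(x_i(t),x_\infty)\le D(t)$. Writing $q(t)=\tan(D(t)/2)$, the half-angle bounds give
\begin{equation*}
D(t)=2\arctan q(t)\le2\arctan Q(t)\le2\operatorname{arsinh}Q(t).
\end{equation*}
The last inequality holds because $(1+x^2)^{-1}\le(1+x^2)^{-1/2}$ for $x\ge0$, with equality at $x=0$. Finally, $\abar\le1/n\le1/2$ because $n\ge2$ and $\beta\ge0$, so $2\le1/\abar$. Combined with $\operatorname{arsinh}Q\le Q$, this yields \eqref{eq:geodesic-tail}.
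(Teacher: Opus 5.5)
Your proof is correct, and it takes a genuinely different route from the paper. The paper does not use hemisphere invariance here. It bounds each token's speed by the chordal diameter, $\norm{\dot x_i}\le\sum_jA_{ij}\norm{x_j-x_i}\le L(t)\le 2Q(t)/\sqrt{1+Q(t)^2}$, and integrates along the trajectory. The substitution $\mathrm dQ=-2\abar Q\,\mathrm dt$ turns the integral over $[t,s]$ into exactly $\abar^{-1}\bigl[\operatorname{arsinh}Q(t)-\operatorname{arsinh}Q(s)\bigr]$. This one computation gives the Cauchy property, the common limit, and the odd-looking $\abar^{-1}\operatorname{arsinh}$ constant.

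You instead trap the tokens in nested spherical convex hulls, via \cref{prop:hemisphere-invariance} restarted at time $s$, or via the monotone lifted extrema when $d=2$. Then $x_\infty$ lies in every hull, and $\distS(x_i(t),x_\infty)\le D(t)\le2\arctan Q(t)$ with no integration. This is a stronger estimate. It carries no $1/\abar$ factor, so it improves the stated tail by a factor of order $1/(2\abar)=\tfrac12\bigl(1+(n-1)\e^{2\beta}\bigr)$. You reach \eqref{eq:geodesic-tail} only through the crude comparison $2\le1/\abar$. The paper's argument is shorter and needs only the diameter decay and a pointwise speed bound. Yours needs an invariant convex region and a hull-diameter lemma.

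That lemma is the one step to tighten. As you state it (``for a set contained in an open hemisphere, the geodesic diameter of its spherical convex hull equals the maximal pairwise distance''), it is false. Take three points equally spaced in longitude at small latitude $\epsilon>0$. They lie in the open upper hemisphere with pairwise distances near $2\pi/3$. Yet the midpoint of one edge is nearly antipodal to the third vertex. So $\distS(\cdot,z)$ can have an interior maximum along a geodesic inside an open hemisphere that is not centred at $z$.

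What you actually need is the acute case, and it already holds for $c\ge0$, so the detour through $c(t)>0$ is unnecessary. If $D\le\pi/2$, the cap $\{y:\ip{y}{z}\ge\cos D\}$ is the trace on the sphere of the convex cone $\{y:\ip{y}{z}\ge\cos D\,\norm{y}\}$. Hence it contains the hull as soon as it contains the vertices. Apply this first with $z$ a vertex, and then with $z$ an arbitrary hull point; this gives $\operatorname{diam}H(t)\le D(t)$. In the circle case the hull is just the lifted arc of length $D<\pi$, so no such lemma is needed there.
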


The proof in \cref{app:global-proofs} treats the nonsmooth minima with finite
active-set Dini calculus.  In particular, negative-part Gr\"onwall estimates
close the boundary cases $m_w(0)=0$ and $c_0=0$ rather than relying on a
formal tangency statement alone.

\paragraph{Rate bookkeeping.}
Let $Y=1-c$ and let $L=\sqrt{2(1-c)}$ be the associated chordal diameter.
Then $Y=L^2/2$ identically.  The half-angle bounds imply
\begin{equation}
D(t)\le2q_0\e^{-2\abar t},
\qquad
L(t)\le2q_0\e^{-2\abar t},
\qquad
Y(t)\le2q_0^2\e^{-4\abar t}.
\label{eq:observable-rates}
\end{equation}
Thus the logarithmic rate of the quadratic deficit $1-c$ is twice that of the
chordal or angular diameter.  This factor is kept explicit in all numerical
rate comparisons.

\paragraph{Local versus uniform rates.}
The consensus gap $\gamma$ in \eqref{eq:gap-definition} is the exact slowest
infinitesimal transverse exponent at a specified consensus point.  By
contrast, $2\abar$ is a state- and position-uniform nonlinear diameter
exponent.  The latter is necessarily conservative.  Indeed, for $\beta>0$,
\[
A^\star
=\abar\one\one^\top+(1-n\abar)B,
\qquad
B=\frac{A^\star-\abar\one\one^\top}{1-n\abar},
\]
where $B$ is row stochastic.  On the quotient by $\operatorname{span}\{\one\}$,
the rank-one term vanishes, so every non-Perron eigenvalue of $A^\star$ has
modulus at most $1-n\abar$.  Together with the positive spectrum from
\cref{thm:psd-slowdown}, this gives
\begin{equation}
\gamma\ge n\abar\ge2\abar.
\label{eq:local-uniform-rate-comparison}
\end{equation}
At $\beta=0$ the same inequality holds with equality
$\gamma=n\abar=1$.  On a fixed resonant ring, the asymptotics make the
separation quantitative:
\begin{align}
\frac{\gamma}{2\abar}
&\sim n-1,
&&L=2,\nonumber\\
\frac{\gamma}{2\abar}
&\sim (n-1)\Delta_L
       \e^{\beta(2-\Delta_L)},
&&L\ge3.
\label{eq:fixed-ring-local-global-ratio}
\end{align}
Thus the exact local rate and the uniform global guarantee answer different
questions and should not be numerically identified.

\section{Twisted-state stability and multi-frequency geometry}
\label{sec:twisted-multifrequency}

\subsection{Linear spectrum of twisted equilibria}

\begin{proposition}[Nontrivial resonant twisted states]
\label{prop:twisted-instability}
Let $p_j=j$, let $m\in\Z$, let $\omega=2\pi m/n$, and let $\bar m$ be
$m$ modulo $n$.
Assume $\bar m\ne0$.  At the twisted equilibrium
$\theta_j=c-\omega j$, the Jacobian is
\begin{equation}
J=\frac1n C,
\qquad
C_{ij}=\cos\!\left(\frac{2\pi m(i-j)}n\right).
\label{eq:twisted-jacobian}
\end{equation}
If $2\bar m\not\equiv0\pmod n$, then
\begin{equation}
\spec(J)=\left\{\tfrac12\ \text{(mult. 2)},\;
0\ \text{(mult. $n-2$)}\right\}.
\label{eq:generic-twisted-spectrum}
\end{equation}
If $n$ is even and $\bar m=n/2$, then
\begin{equation}
\spec(J)=\left\{1\ \text{(mult. 1)},\;
0\ \text{(mult. $n-1$)}\right\}.
\label{eq:even-antipodal-spectrum}
\end{equation}
These equilibria have growing linear modes and center modes, but no linearly
stable modes.  They are therefore non-hyperbolic and linearly unstable, not
saddles in the usual hyperbolic sense.
\end{proposition}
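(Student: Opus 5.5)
The plan is to linearize the phase flow \eqref{eq:phase-flow} directly at the twisted configuration. On $\Sphere^1$ this is equivalent to \eqref{eq:sphere-flow}, since the tangent coordinate of token $i$ is $\dot\theta_i$. Write the right-hand side as $F_i(\theta)=\sum_j A_{ij}(\theta)\sin(\theta_j-\theta_i)$, with $A_{ij}=\e^{\beta\cos\delta_{ij}}/Z_i$ and $\delta_{ij}$ as in \eqref{eq:natural-energy}.

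\textbf{Step 1: the attention weights drop out of the Jacobian.} At $\theta_j=c-\omega j$ we have $\delta_{ij}=-\omega(i-j)+\omega(i-j)=0$ exactly. Hence $A_{ij}=1/n$, consistent with \cref{prop:twisted-equilibria}, which also guarantees this is an equilibrium because $n\omega\in2\pi\Z$. Every partial derivative $\partial A_{ij}/\partial\theta_k$ is a linear combination of terms $\beta\sin\delta_{il}\,\partial\delta_{il}/\partial\theta_k$, coming both from the numerator and from $Z_i$. All of these vanish at $\delta\equiv0$. So only the derivative of the sine factor survives, and
\[
J_{ik}=\frac1n\cos(\theta_k-\theta_i)-\delta_{ik}\,\frac1n\sum_{j}\cos(\theta_j-\theta_i).
\]

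\textbf{Step 2: identify $J$ with $C/n$.} Since $\theta_k-\theta_i=\omega(i-k)$, the off-diagonal part is $C_{ik}/n$, and the diagonal gets the additional correction $-\frac1n\sum_j\cos(2\pi m(i-j)/n)$. This is the real part of a full sum of $n$th roots of unity $\e^{2\pi\ii \bar m\ell/n}$, which vanishes because $\bar m\ne0$. Hence $J=C/n$, giving \eqref{eq:twisted-jacobian}. This cancellation is the one place where resonance, as opposed to mere equilibrium, is essential, so I will state it explicitly.

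\textbf{Step 3: spectrum via a Fourier rank decomposition.} Let $v_j=\e^{2\pi\ii \bar m j/n}$. Then
\[
C=\tfrac12\bigl(vv^{*}+\bar v\,\bar v^{*}\bigr).
\]
The vectors $v$ and $\bar v$ are the discrete Fourier modes with indices $\bar m$ and $n-\bar m$, and each has squared norm $n$.

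If $2\bar m\not\equiv0$, the two indices are distinct, so $v\perp\bar v$. Then $Cv=\tfrac n2 v$ and $C\bar v=\tfrac n2\bar v$, while every other Fourier mode is orthogonal to both and lies in the kernel. This gives \eqref{eq:generic-twisted-spectrum} after dividing by $n$.

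If $\bar m=n/2$, then $v=\bar v=((-1)^j)_j$ is real and $C=vv^\top$. Thus $C$ has the single nonzero eigenvalue $n$ and kernel of dimension $n-1$, which gives \eqref{eq:even-antipodal-spectrum}.

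\textbf{Step 4: classification.} Because $J$ is symmetric, the spectrum is real. It contains a positive eigenvalue, so the equilibrium is linearly unstable. It contains a zero eigenvalue of multiplicity at least $n-2\ge1$, so it is non-hyperbolic; this kernel includes the global-rotation direction $\one$, since $C\one=0$. There are no negative eigenvalues, so there are no linearly stable modes, and the word ``saddle'' in the hyperbolic sense does not apply.

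I expect the only real obstacle to be the bookkeeping in Step 1. One must confirm that differentiating the normalization $Z_i$ introduces no surviving term, and that the factor $\sin\delta$ appears in every such contribution. I would verify this by writing $\partial_k\log A_{ij}=\beta\bigl(-\sin\delta_{ij}\,\partial_k\delta_{ij}+\sum_l A_{il}\sin\delta_{il}\,\partial_k\delta_{il}\bigr)$, which makes the vanishing immediate.
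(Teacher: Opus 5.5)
Your proposal is correct and follows the paper's proof essentially step for step. The score variation vanishes at zero adjusted phase, and your $\partial_k\log A_{ij}$ formula makes explicit what the paper states in one line. The root-of-unity sum gives $C\one=0$, so $J=C/n$. Both spectra then follow from the rank-two decomposition $C=\tfrac12(vv^*+\bar v\bar v^*)$, or from the rank-one form $C=vv^\top$ when $\bar m=n/2$.

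One cosmetic point in Step 4: the claim ``multiplicity at least $n-2\ge1$'' fails for $n=2$, $\bar m=1$. This does not affect the conclusion, because your observation $C\one=0$ already supplies the zero (center) mode in every case.
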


The trivial mode $\bar m=0$ is ordinary consensus and has spectrum
$\{0,-1\text{ (mult. $n-1$)}\}$; it is excluded from
\cref{prop:twisted-instability}.

\begin{proposition}[Odd antipodal exception]
\label{prop:odd-antipodal-saddle}
Let $d=2$, $p_j=j$ for $j=0,\ldots,n-1$, $n=2r+1$, and
$\omega\in(2\Z+1)\pi$.  The alternating twisted circle is an equilibrium
circle, and its Jacobian has spectrum
\begin{equation}
\spec(J)=
\left\{
-\tfrac1n\ \text{(mult. $r$)},\;
0\ \text{(mult. 1)},\;
\tfrac1n\ \text{(mult. $r-1$)},\;
1\ \text{(mult. 1)}
\right\}.
\label{eq:odd-antipodal-spectrum}
\end{equation}
The unique zero mode is global phase rotation.  After quotienting this
symmetry, the equilibrium is a hyperbolic saddle with $r$ stable and $r$
unstable dimensions.
\end{proposition}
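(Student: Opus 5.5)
The plan is to verify the equilibrium property directly, reduce the Jacobian to a rank-one perturbation of a diagonal sign matrix, and then read off the spectrum block by block.

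\emph{Equilibrium.} Since $\omega\equiv\pi\pmod{2\pi}$, the twisted configuration \eqref{eq:twisted-state} becomes $\theta_j\equiv c-\pi j$. Write $\sigma_j=(-1)^j$. Then the tokens sit at $c$ (for even $j$, $r+1$ tokens) and at $c+\pi$ (for odd $j$, $r$ tokens). The RoPE-adjusted phases satisfy $\delta_{ij}\equiv0\pmod{2\pi}$, so all scores equal $1$ and $A_{ij}=1/n$, as already noted before \cref{prop:twisted-equilibria}. Every $\sin(\theta_j-\theta_i)$ vanishes, so \eqref{eq:phase-flow} gives an equilibrium for every $c$; this is the equilibrium circle. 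The case is also covered by \eqref{eq:twisted-iff} and \cref{prop:bipodal-boundary}.

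\emph{Jacobian.} Differentiate \eqref{eq:phase-flow}. Any term in which $A_{ij}$ is differentiated is multiplied by $\sin(\theta_j-\theta_i)=0$ at the equilibrium, so the $\theta$-dependence of the softmax drops out. This gives
\[
J_{ij}=\tfrac1n\cos(\theta_j-\theta_i)=\tfrac1n\sigma_i\sigma_j\quad(j\ne i),
\qquad
J_{ii}=-\tfrac1n\textstyle\sum_{j\ne i}\sigma_i\sigma_j=\tfrac1n(1-\sigma_iS),
\]
where $S=\sum_j\sigma_j=1$ because $n$ is odd. Hence
\[
J=\tfrac1n\bigl(\sigma\sigma^\top-\Sigma\bigr),\qquad \Sigma=\diag(\sigma).
\]
Since $\Sigma^2=I$, conjugating by $\Sigma$ gives $\Sigma J\Sigma=\tfrac1n(\one\one^\top-\Sigma)$, which has the same spectrum. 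I then decompose $\R^n$ according to the sign blocks $P=\{\sigma=+1\}$ of size $r+1$ and $M=\{\sigma=-1\}$ of size $r$.
\begin{itemize}
  \item Vectors supported on $P$ with zero sum are annihilated by $\one\one^\top$. They are eigenvectors with eigenvalue $-1/n$, giving multiplicity $r$.
  \item Vectors supported on $M$ with zero sum give eigenvalue $+1/n$, with multiplicity $r-1$.
  \item On the invariant complement $\operatorname{span}\{\one_P,\one_M\}$, the operator acts by
  \[
  \tfrac1n\begin{pmatrix} r & r\\ r+1 & r+1\end{pmatrix}.
  \]
  This matrix has trace $1$ and determinant $0$, so its eigenvalues are $0$ and $1$.
\end{itemize}
This accounts for all $n=2r+1$ dimensions and yields \eqref{eq:odd-antipodal-spectrum}.

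\emph{Zero mode and the quotient.} The $0$-eigenvector of $\Sigma J\Sigma$ is $\one_P-\one_M=\sigma$. Mapping back through $\Sigma$, it becomes $\Sigma\sigma=\one$, the global phase rotation, which is tangent to the equilibrium circle. This zero mode has multiplicity one, and every other eigenvalue is real and nonzero. After quotienting by rotation, the equilibrium is therefore hyperbolic. It has $r$ stable directions (the eigenvalue $-1/n$) and $(r-1)+1=r$ unstable directions (the eigenvalues $1/n$ and $1$).

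The main obstacle is getting the diagonal of $J$ right, that is, correctly using $S=1$ (odd $n$) and justifying that the derivatives of the softmax weights drop out. The block eigen-decomposition after the $\Sigma$-conjugation is routine. For $r=1$ the $+1/n$ block is empty, which is consistent with multiplicity $r-1=0$.
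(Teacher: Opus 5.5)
Your proposal is correct and follows essentially the paper's route. You arrive at the same matrix $J=\tfrac1n(\sigma\sigma^\top-\diag(\sigma))$ and the same decomposition of $\R^n$: zero-sum vectors on each sign class, plus the two-dimensional span of the class indicators. The $\Sigma$-conjugation and the trace/determinant computation on that block are only cosmetic variants of the paper's explicit eigenvectors $\one$ and $s-\tfrac1n\one$, and your reason for dropping the softmax derivatives (every $\sin(\theta_j-\theta_i)$ vanishes) is an equally valid substitute here for the paper's observation that every score variation vanishes at zero adjusted phase.
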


The two blocks of \cref{fig:energy-twisted-branch} visualize independent
consequences of the same rotary-score/unrotated-value structure.  The
energy-sign failure does not cause the twisted branch, and the figure uses no
causal implication between them.

\begin{figure}[t]
\centering
\includegraphics[width=0.97\textwidth]{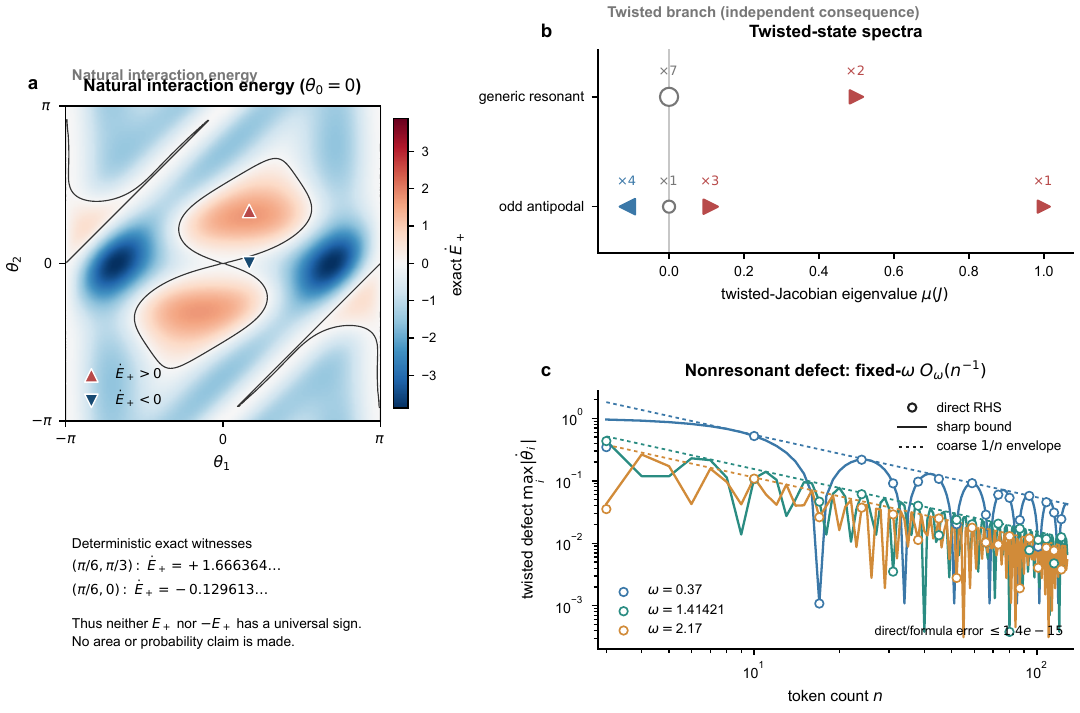}
\caption{\textbf{Natural-energy sign failure and twisted branches.}
\textbf{a}, Exact field of $\dot E_+$ for
$n=3$, $p=(0,1,2)$, $\beta=1$, and $\omega=\pi$, after fixing the global
phase by $\theta_0=0$.  Black curves are the zero level; the two triangular
markers are the deterministic witnesses
$(\theta_1,\theta_2)=(\pi/6,\pi/3)$ and $(\pi/6,0)$.
The field rules out a universal Lyapunov sign for $E_+$ and $-E_+$ only; it
does not rule out other Lyapunov functions and carries no area or probability
interpretation.
\textbf{b}, Exact twisted-Jacobian spectra for the generic resonant case
$(n,m)=(9,2)$ and the odd antipodal case $(n,\omega)=(9,\pi)$.
Blue, grey, and red markers denote stable, center, and growing modes, with
multiplicities shown.  The generic branch is non-hyperbolic and linearly
unstable, whereas the odd antipodal branch is a saddle after quotienting
global rotation.
\textbf{c}, Direct nonresonant drift (open circles), its sharp
geometric-sum bound (solid), and the fixed-frequency
$1/[n|\sin(\omega/2)|]$ envelope (dotted).  The bound is
$O_\omega(n^{-1})$, not uniform as $\omega$ approaches
$2\pi\mathbb Z$.  All quantities are deterministic; no error bars are
applicable.}
\label{fig:energy-twisted-branch}
\end{figure}

\subsection{Multi-frequency consensus spectra}

\begin{theorem}[Multi-frequency kernel and local rate]
\label{thm:multifrequency-spectrum}
Let $x^\star$ be a consensus point with plane energies $a$ as in
\eqref{eq:plane-energies}, and let $W$, $D=\diag(W\one)$, and
$A=D^{-1}W$ be defined by \eqref{eq:consensus-kernel}.  Then $W\succeq0$ and
$A$ is similar to
\begin{equation*}
S=D^{-1/2}WD^{-1/2}\succeq0.
\end{equation*}
Consequently,
\begin{equation}
1=\lambda_1(A)>\lambda_2(A)\ge\cdots\ge\lambda_n(A)\ge0,
\qquad
\gamma(a)=1-\lambda_2(A)\in(0,1].
\label{eq:multifrequency-gap}
\end{equation}
The tangent Jacobian is $(A-I_n)\otimes I_{d-1}$.  If $\beta>0$, equality
$\gamma(a)=1$ holds exactly under the active-plane invisibility condition
\eqref{eq:invisible-planes}.
\end{theorem}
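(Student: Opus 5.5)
The plan is to make the Gram/Schur structure of $K_a$ explicit and then read off the spectrum from a symmetric similarity. First, I write $K_a(p_i-p_j)=\ip{u_i}{u_j}$ with
\[
u_i=\bigl(\sqrt{a_\ell}\cos(\omega_\ell p_i),\sqrt{a_\ell}\sin(\omega_\ell p_i)\bigr)_{\ell=1}^{r}\in\R^{2r},
\]
using $\cos(\omega_\ell(p_i-p_j))=\cos\omega_\ell p_i\cos\omega_\ell p_j+\sin\omega_\ell p_i\sin\omega_\ell p_j$. Then $G=[K_a(p_i-p_j)]\succeq0$. The entrywise power series gives $W=\sum_{k\ge0}\beta^k G^{\circ k}/k!$. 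Each Hadamard power is PSD by the Schur product theorem, and $G^{\circ0}=\one\one^\top$, so $W\succeq0$. Since $W$ has positive entries, $D$ is positive diagonal and $A=D^{-1}W=D^{-1/2}SD^{1/2}$. Hence $A$ is similar to the symmetric matrix $S\succeq0$, and $\spec(A)\subset[0,\infty)$ is real.

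Next I bound the spectrum above. Because $A$ is row stochastic with strictly positive entries, Perron--Frobenius gives $\lambda_1=1$ as a simple eigenvalue with $|\lambda|\le1$ for all others. Alternatively, the bound follows from \cref{thm:wellposed-balance-floor}: $A_{ij}\ge\abar>0$ together with the quotient bound $|\lambda|\le1-n\abar<1$ for non-Perron eigenvalues, as in \eqref{eq:local-uniform-rate-comparison}. Together with nonnegativity this gives \eqref{eq:multifrequency-gap} and $\gamma(a)\in(0,1]$. The Jacobian statement is \cref{thm:consensus-linearization} with $A^\star=A$, which depends only on $a$ by \cref{prop:consensus-kernel}.

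The main step is the equality case $\gamma=1$ for $\beta>0$. Because all eigenvalues are nonnegative, $\gamma=1$ iff $\lambda_2=\dots=\lambda_n=0$, that is, iff $\operatorname{rank}S=1$, equivalently $\operatorname{rank}W=1$. If \eqref{eq:invisible-planes} holds, then $K_a(p_i-p_j)=\sum_\ell a_\ell=1$ for all $i,j$. So $W=\e^\beta\one\one^\top$ and $A=\one\one^\top/n$, giving $\gamma=1$.

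For the converse, I use that $W$ is a sum of PSD terms $\beta^kG^{\circ k}/k!$. If $W$ has rank one, then each term has range contained in $\operatorname{range}W$. The $k=0$ term forces this range to be $\operatorname{span}\{\one\}$. The $k=1$ term then forces $G=c\,\one\one^\top$, and since $G_{ii}=1$ this means $G=\one\one^\top$. So $\ip{u_i}{u_j}=1=\norm{u_i}\norm{u_j}$, which forces $u_i=u_j$. Hence for every $\ell$ with $a_\ell>0$ we get $(\cos\omega_\ell p_i,\sin\omega_\ell p_i)=(\cos\omega_\ell p_j,\sin\omega_\ell p_j)$, i.e.\ $\omega_\ell(p_i-p_j)\in2\pi\Z$. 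This is exactly \eqref{eq:invisible-planes}.

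The delicate point is this converse. The step to check is the claim that a rank-one sum of PSD matrices forces each summand into the common range. It holds because $\ker W=\bigcap_k\ker(\beta^kG^{\circ k})$ for PSD summands with positive coefficients. The argument also needs $\beta>0$ so that the $k=1$ term is present; this matches the case $\beta=0$, where $\gamma=1$ trivially.
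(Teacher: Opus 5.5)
Your proposal is correct, and apart from one step it is the paper's argument. The Gram vectors, the Schur-product series for $W$, the similarity $A=D^{-1/2}SD^{1/2}$, Perron--Frobenius for the simple top eigenvalue, the Jacobian via \cref{thm:consensus-linearization}, and the reduction of $\gamma=1$ to $\operatorname{rank}W=1$ all match.

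The difference is in the converse of the equality case. The paper works directly with the entries of $W$. A rank-one matrix has vanishing $2\times2$ principal minors, and the diagonal of $W$ is constantly $\e^{\beta}$, so $\e^{2\beta K_{ij}}=\e^{2\beta}$ and hence $K_{ij}=1$ for $\beta>0$. Invisibility is then read off from the nonnegative decomposition $1-K_{ij}=\sum_\ell a_\ell[1-\cos(\omega_\ell(p_i-p_j))]$.

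You instead use the kernel-intersection property of a convergent sum of positive semidefinite matrices. This pushes the rank-one structure down to the $k=0$ and $k=1$ Hadamard terms, giving $G=\one\one^\top$, and you finish with equality in Cauchy--Schwarz for the unit vectors $u_i$. The argument is valid. What it needs is that $x^\top Wx=\sum_k\beta^kx^\top G^{\circ k}x/k!$ is a convergent sum of nonnegative terms, and that $\beta>0$ keeps the $k=1$ term. Your final step coincides with the paper's, since $\norm{u_i-u_j}^2=2(1-K_{ij})$.

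The paper's minor argument is shorter and uses nothing about the series beyond the entries $\e^{\beta K_{ij}}$. Yours is more structural but reaches the same intermediate fact $K=\one\one^\top$.

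One small caution on your ``alternatively'' remark. Cite only the modulus bound $\abs{\lambda}\le1-n\abar$ for non-Perron eigenvalues, which is independent of this theorem. Do not cite the display \eqref{eq:local-uniform-rate-comparison}: its lower bound on $\gamma$ was derived from the positive spectrum you are proving.
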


For consecutive integer positions, replacing any frequency $\omega_\ell$ by
$\pm\omega_\ell+2\pi k$ leaves the kernel unchanged.  Thus finite sampling
alone prevents any universal ordering by the numerical magnitude of a
frequency.

\begin{proposition}[Analytic non-monotonicity in plane energy]
\label{prop:analytic-energy-nonmonotonicity}
Take
\begin{equation*}
n=3,\qquad p=(0,1,2),\qquad
\beta=\frac34\log2,\qquad
(\omega_1,\omega_2)=\left(\frac\pi2,\pi\right),
\end{equation*}
and let $a\in[0,1]$ be the energy in the first plane.  Then
\begin{align}
\gamma(0)&=\frac{35\sqrt2-32}{31},\nonumber\\
\gamma(2/3)&=\frac34,\nonumber\\
\gamma(1)&=\frac{u(1+2u)}{1+u+u^2},
\qquad u=2^{-3/4}.
\label{eq:analytic-nonmonotone-values}
\end{align}
In particular,
\begin{equation*}
\gamma(2/3)>\max\{\gamma(0),\gamma(1)\},
\end{equation*}
so $a\mapsto\gamma(a)$ is not generally monotone.
\end{proposition}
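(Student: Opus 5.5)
The plan is to compute $\gamma(a)$ in closed form from the explicit $3\times3$ kernel. I will use \cref{thm:multifrequency-spectrum} to know that the spectrum is real, nonnegative and topped by the Perron eigenvalue $1$. The problem symmetry $i\mapsto 2-i$ will then reduce the remaining spectrum to two scalars.

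\emph{Kernel.} With $p=(0,1,2)$ and $(\omega_1,\omega_2)=(\pi/2,\pi)$, \eqref{eq:consensus-kernel} gives
\[
K_a(0)=1,\qquad K_a(1)=a\cos\tfrac\pi2+(1-a)\cos\pi=-(1-a),\qquad K_a(2)=-a+(1-a)=1-2a.
\]
Set $t=\e^{\beta}=2^{3/4}=1/u$. Then $W$ is symmetric and persymmetric, with entries
\[
W_{ii}=t,\qquad W_{12}=W_{23}=t^{a-1},\qquad W_{13}=t^{1-2a}.
\]
The degrees are $Z_1=Z_3=t+t^{a-1}+t^{1-2a}$ and $Z_2=t+2t^{a-1}$.

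\emph{Splitting the spectrum.} Reversal commutes with $W$ and with $D$, so $A=D^{-1}W$ preserves the antisymmetric line spanned by $(1,0,-1)$ and the two-dimensional symmetric subspace.
\begin{itemize}[leftmargin=2em]
  \item On the antisymmetric line the middle entry cancels, so the eigenvalue is
  \[
  \lambda_{\rm anti}=\frac{t-t^{1-2a}}{Z_1}.
  \]
  \item The symmetric subspace contains $\one$, with eigenvalue $1$. Its other eigenvalue follows from the trace:
  \[
  \lambda_{\rm sym}=\operatorname{tr}A-1-\lambda_{\rm anti},\qquad \operatorname{tr}A=\frac{2t}{Z_1}+\frac{t}{Z_2}.
  \]
\end{itemize}
Hence $\gamma(a)=1-\max\{\lambda_{\rm anti},\lambda_{\rm sym}\}$, and both quantities are explicit in $t$.

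\emph{Evaluating the three points.}
\begin{itemize}[leftmargin=2em]
  \item $a=2/3$: every off-diagonal entry equals $s=t^{-1/3}=2^{-1/4}$, and $t/s=2$. So $W=(t-s)I+sJ$ has constant degree, and both nontrivial eigenvalues equal $(t-s)/(t+2s)=1/4$. This gives $\gamma=3/4$.
  \item $a=1$: here $W_{12}=1$ and $W_{13}=t^{-1}=u$. Dividing through by $t$ gives $\lambda_{\rm anti}=(1-u^2)/(1+u+u^2)$. The symmetric eigenvalue comes from the trace formula.
  \item $a=0$: here $W_{12}=u$ and $W_{13}=t$. The antisymmetric eigenvalue is $0$, so $\gamma=1-\lambda_{\rm sym}$, with $\lambda_{\rm sym}$ obtained from $2t/(2t+u)+t/(t+2u)-1$. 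Rationalizing in $2^{1/4}$ should produce the $\sqrt2$ form claimed in \eqref{eq:analytic-nonmonotone-values}.
\end{itemize}

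\emph{Comparison.} Once the closed forms are in hand, numerics confirm the claimed ordering, e.g. $\gamma(0)\approx0.565<0.75$. To turn this into an exact comparison, I will reduce each inequality to a polynomial inequality in $2^{1/4}$ and verify it rigorously.

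\emph{Main obstacle.} I expect the hardest step to be the bookkeeping in the trace formula: identifying which of $\lambda_{\rm anti}$ and $\lambda_{\rm sym}$ is the larger one at $a=0$ and at $a=1$, and simplifying the nested radicals into exactly the stated closed forms. Everything else is direct substitution.
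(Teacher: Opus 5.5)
Your proposal is correct and follows essentially the same route as the paper. The paper also takes the reversal-odd vector $(1,0,-1)^\top$ for one non-Perron eigenvalue, gets the other from the trace identity, and substitutes $a=0,2/3,1$, merely rescaling the kernel by $\e^{-\beta}$ first (lag weights $x=\e^{\beta(a-2)}$, $y=\e^{-2\beta a}$). The step you flag as the obstacle is quick: at $a=0$ the antisymmetric eigenvalue is $0$, and at $a=1$ one finds $\lambda_{\rm anti}-\lambda_{\rm sym}=2u(1-u^2)/\bigl[(1+u+u^2)(1+2u)\bigr]>0$, so $\gamma(1)=1-\lambda_{\rm anti}$ as claimed.
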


The calculation is elementary.  After scaling the diagonal kernel entries to
one, let $x$ and $y$ be the lag-one and lag-two weights.  The two non-Perron
eigenvalues are
\begin{equation}
\lambda_- =\frac{1-y}{1+x+y},
\qquad
\lambda_+ =\frac{1+y}{1+x+y}+\frac{1}{1+2x}-1.
\label{eq:three-token-eigenvalues}
\end{equation}
At $a=2/3$, $x=y=1/2$, so both equal $1/4$.

\begin{proposition}[Analytic frequency-order reversal]
\label{prop:analytic-frequency-reversal}
For $\beta>0$, two tokens at positions $(0,\Delta)$, and one active
frequency $\omega$,
\begin{equation}
\gamma_\omega(\Delta)=\frac{2q}{1+q},
\qquad
q=\exp\!\bigl(\beta[\cos(\omega\Delta)-1]\bigr).
\label{eq:two-token-gap}
\end{equation}
Take $\omega_{\rm low}=\pi/2$ and $\omega_{\rm high}=\pi$.  At $\Delta=1$,
the high frequency is slower; at $\Delta=2$, it is completely aliased and has
$\gamma_{\rm high}=1$, while the low-frequency gap is strictly smaller than
one.  Hence no position-independent ordering by frequency magnitude exists.
\end{proposition}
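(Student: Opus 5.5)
The plan is a direct $2\times2$ computation. The statement reduces to the case $r$ arbitrary with only one active plane, so $K_a(h)=\cos(\omega h)$ up to the inactive planes, which contribute nothing. With two tokens at positions $0$ and $\Delta$, \cref{prop:consensus-kernel} gives
\[
W^\star=\begin{pmatrix}\e^{\beta}&\e^{\beta\cos(\omega\Delta)}\\ \e^{\beta\cos(\omega\Delta)}&\e^{\beta}\end{pmatrix}.
\]
Dividing by $\e^{\beta}$ leaves $A^\star$ unchanged, so the rows become $(1,q)$ and $(q,1)$ with $q=\exp(\beta[\cos(\omega\Delta)-1])$. The two degrees are equal, hence
\[
A^\star=\frac{1}{1+q}\begin{pmatrix}1&q\\ q&1\end{pmatrix}.
\]
This matrix is symmetric with eigenvector $\one$ for eigenvalue $1$ and eigenvector $(1,-1)$ for eigenvalue $(1-q)/(1+q)$. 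Since $q\in(0,1]$, the second eigenvalue lies in $[0,1)$, consistent with \cref{thm:psd-slowdown}. By \eqref{eq:gap-definition},
\[
\gamma_\omega(\Delta)=1-\frac{1-q}{1+q}=\frac{2q}{1+q},
\]
which is \eqref{eq:two-token-gap}. The function $q\mapsto 2q/(1+q)$ is strictly increasing on $(0,1]$ and equals $1$ exactly at $q=1$. So comparing gaps reduces to comparing $\cos(\omega\Delta)$ at fixed $\beta>0$.

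Next come the explicit evaluations.
\begin{itemize}
\item At $\Delta=1$: $\cos(\pi/2)=0$ while $\cos(\pi)=-1$. Hence $q_{\rm high}=\e^{-2\beta}<\e^{-\beta}=q_{\rm low}$, so $\gamma_{\rm high}<\gamma_{\rm low}$ and the high frequency is slower.
\item At $\Delta=2$: $\cos(2\pi)=1$, so $q_{\rm high}=1$ and $\gamma_{\rm high}=1$. This matches the invisibility criterion \eqref{eq:invisible-planes}, since $\omega\Delta\in2\pi\Z$. Meanwhile $\cos(\pi)=-1$ gives $q_{\rm low}=\e^{-2\beta}<1$, so $\gamma_{\rm low}=2\e^{-2\beta}/(1+\e^{-2\beta})<1$.
\end{itemize}

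Finally, the ordering of the two gaps reverses between $\Delta=1$ and $\Delta=2$. Therefore no rule ranking gaps by $\abs{\omega}$ alone, independent of the position set, can hold. There is no genuine obstacle here; the only care needed is fixing the convention that the gap uses the second eigenvalue in non-increasing order, and noting that this is $(1-q)/(1+q)$ because it is nonnegative.
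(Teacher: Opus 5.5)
Your proposal is correct and follows the paper's own proof: you scale the two-token consensus kernel by the common diagonal weight to get rows $(1,q)$ and $(q,1)$, read off the non-Perron eigenvalue $(1-q)/(1+q)$, and then use that the gap $2q/(1+q)$ is increasing in $q$ for the evaluations at $\Delta=1$ and $\Delta=2$. Your remark that $q\in(0,1]$ makes this eigenvalue the second one in non-increasing order is a small clarification the paper leaves implicit.
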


For the standard two-plane frequencies $(1,0.01)$ at $\beta=4$, direct
finite-context spectra exhibit the same reversal:
\begin{align}
n=256:&\quad
\gamma_{\rm low}=0.240298,\qquad
\gamma_{\rm high}=0.134730,\nonumber\\
n=384:&\quad
\gamma_{\rm low}=0.112756,\qquad
\gamma_{\rm high}=0.135707.
\label{eq:finite-context-reversal}
\end{align}
At $n=384$, the equal-energy mixture has gap $0.263105$, larger than both
endpoints.  These are deterministic observations on the displayed finite
contexts, not a scaling theorem or a prerequisite for the analytic
counterexamples above.

\begin{figure}[t]
\centering
\includegraphics[width=\textwidth]{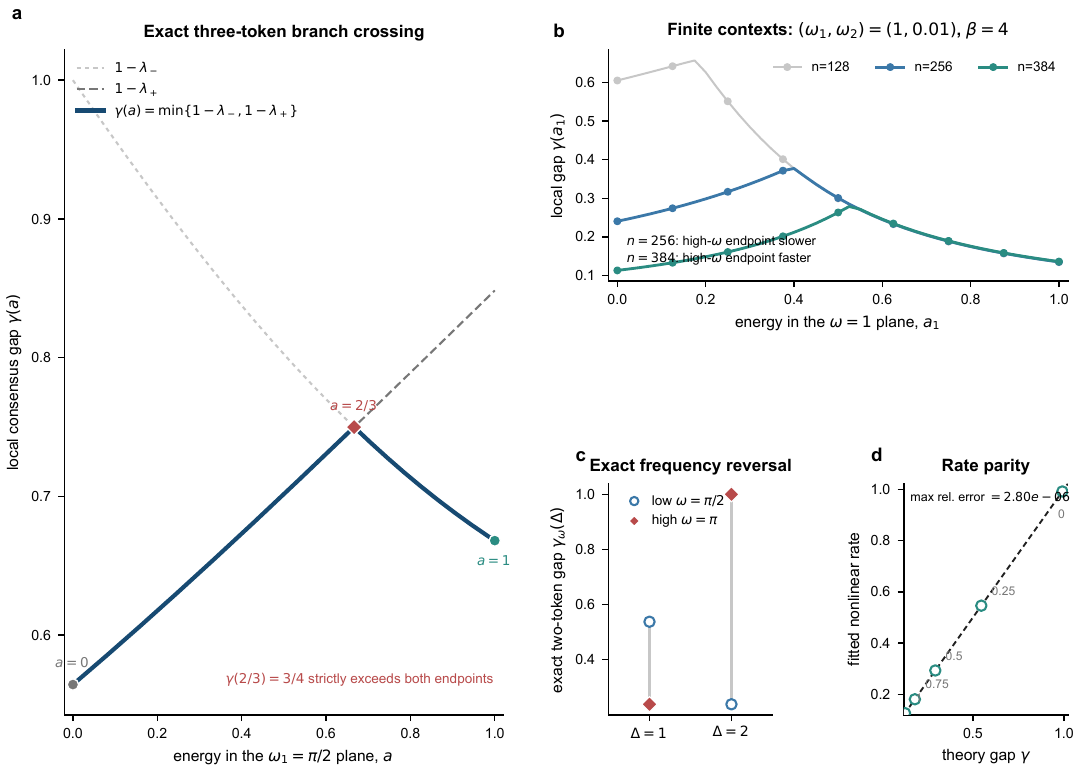}
\caption{\textbf{No universal monotone ordering of multi-plane consensus
rates.}
\textbf{a}, For the exact three-token construction in
\cref{prop:analytic-energy-nonmonotonicity}, the two thin curves are the
branch-limited rates $1-\lambda_-$ and $1-\lambda_+$, and the thick curve is
their minimum $\gamma(a)$.  The exact interior value
$\gamma(2/3)=3/4$ strictly exceeds both endpoints; no claim of a global
maximum is needed.
\textbf{b}, Deterministic contextual observations for
$(\omega_1,\omega_2)=(1,0.01)$ and $\beta=4$.  The plane-energy curves for
$n=256$ and $n=384$ have opposite endpoint orderings, while the $n=128$
curve is shown as a muted reference.  This panel is an observation on the
displayed finite contexts, not a scaling theorem.
\textbf{c}, The exact two-token formula in
\cref{prop:analytic-frequency-reversal} gives opposite low/high-frequency
orderings at position separations $\Delta=1$ and $\Delta=2$.
\textbf{d}, Five deterministic slow-mode integrations at $n=16$ and
$\beta=4$ compare the predicted tangent gap with the fitted nonlinear decay
rate; labels give the high-frequency plane energy.  No error bars are
applicable.}
\label{fig:multifrequency-nonmonotonicity}
\end{figure}

\section{Numerical validation}
\label{sec:numerical-validation}

The computations below cross-check convention-sensitive formulas and quantify
finite-precision effects; no theorem uses a numerical estimate as a proof
step.  Finite-chain position differences are always literal, and periodic
distances are introduced only under exact resonance.  Scaled Bessel functions
and globally scaled kernels avoid overflow without changing normalized
eigenvalues.  Appendix~\ref{app:numerical-protocol} gives the parameter grids,
error definitions, integration tolerances, and rate-fitting windows.

\begin{table}[ht]
\centering
\caption{Independent numerical cross-checks.  Coverage counts parameter
configurations, not statistical replicates.  The last column reports the
largest discrepancy unless a one-sided inequality residual is stated.}
\label{tab:numerical-crosschecks}
\footnotesize
\setlength{\tabcolsep}{5pt}
\begin{tabular}{@{}P{0.18\textwidth}P{0.34\textwidth}P{0.39\textwidth}@{}}
\toprule
Mathematical object & Independent computations & Coverage and discrepancy \\
\midrule
Resonant spectrum and local rate
& Direct matrix spectrum, congruence-filtered Bessel spectrum, stable Fourier
gap, and nonlinear phase flow
& 144 spectra and 12 rate fits; full-spectrum error
$1.554\times10^{-15}$, Bessel--Fourier gap error
$6.051\times10^{-15}$, and relative rate error
$1.059\times10^{-7}$ \\
\addlinespace
Geometric contraction bounds
& Direct sphere/phase vector fields against the Dini inequalities and
integrated trajectories against the comparison solutions
& 7057 derivative, boundary, and trajectory evaluations; minimum inequality
residual $-3.886\times10^{-16}$ and maximum trajectory excess
$2.961\times10^{-10}$ \\
\addlinespace
Twisted states and drift
& Direct right-hand side, closed drift formula, analytic Jacobian, and
centered finite-difference Jacobian
& 70 equilibrium and 50 nonresonant configurations; drift-formula error
$3.764\times10^{-16}$ and Jacobian error $3.070\times10^{-9}$ \\
\addlinespace
Multi-frequency gap
& Symmetric similarity transform, normalized Laplacian, generalized
eigenproblem, and nonlinear sphere flow
& Nine three-way gap comparisons, 320 positive-semidefiniteness stress
configurations, and five rate fits; gap disagreement
$1.527\times10^{-15}$ and relative rate error
$2.805\times10^{-6}$ \\
\bottomrule
\end{tabular}
\end{table}

For the resonant calculation, structural zero modes are determined by the
exact $\gcd(m,n)$ reachability mask rather than by counting all
floating-point zeros.  This distinction matters at large $\beta$, where
truncating a Bessel tail can numerically erase an analytically positive
reachable mode.  The twelve nonlinear fits span invisible, coprime,
non-coprime, and effective-period-two cases, and their fitted decay rates
agree with the tangent spectral gaps at the accuracy reported in
\cref{tab:numerical-crosschecks}.

The geometric checks include orthogonal initial data, exact bipodal
stationary states, all-pairs-active equiangular configurations, two
orientations relative to the RoPE planes, and inverse temperatures up to
$\beta=16$.  The small negative Dini residual in the table is at the scale of
double-precision roundoff, and the apparent trajectory excess is well below
the $2\times10^{-8}$ integration-comparison tolerance.  The twisted-state
calculation separately treats generic resonance, even resonant
anti-alignment, and the odd antipodal family; nonresonant rows compare the direct drift with
\eqref{eq:twisted-drift-closed-form} and its frequency-dependent
$O_\omega(n^{-1})$ bound.

For the two-plane kernel, the three spectral formulations agree to near
machine precision, while trajectories initialized along the slow tangent mode
recover the predicted gap.  Halving the sphere-integrator step from
$2\times10^{-2}$ to $10^{-2}$ changes the fitted rate by
$1.173\times10^{-8}$ when normalized by the predicted gap.  On an independent circle problem,
stage-wise retracted RK4 agrees with a high-accuracy phase-coordinate DOP853
solution, preserves unit norms to roundoff, and exhibits fourth-order
step-halving behavior.

Finally, the natural-energy derivative is evaluated both from its analytic
directional-derivative identity and by centered differences.  The two exact
same-system states in \cref{prop:energy-no-go} reproduce opposite signs
independently of any parameter scan.  Similarly, the normalized residual
update \eqref{eq:normalized-residual-step} converges at first order to the
sphere vector field \eqref{eq:sphere-flow}, providing a direct numerical
check of the discrete-to-continuous convention.

\section{Limitations and open problems}
\label{sec:limitations}

The model isolates one mechanism: RoPE rotates queries and keys, the value
matrix is the identity, and normalization constrains every token to a sphere.
It omits learned query/key/value maps, feed-forward layers, multiple heads,
causal masking, stochastic depth, and finite-step or finite-depth effects
beyond the first-order normalized-residual limit
\eqref{eq:normalized-residual-limit}.  The theorems therefore explain a
controlled dynamical limit rather than the full behavior of a trained
Transformer.  Architectures that rotate values or replace their coupling law,
such as RoVE and oscillator-inspired attention
\citep{garciacastellanos2026rove,nunley2026kuramoto}, lie outside this model.

The global results are kernel-generic but regional.  A fixed closed hemisphere is invariant but
does not itself imply consensus, and the explicit contraction theorems require
pairwise non-obtuse data or, on the circle, a strict open semicircle.  Bipodal
stationary states show that this boundary cannot be erased by a continuity
argument.

Two asymptotic questions remain outside the theorem set.  First, the
dense-phase expression
\begin{equation*}
1-\frac{I_1(\beta)}{I_0(\beta)}\sim\frac{1}{2\beta}
\end{equation*}
is expected when $\beta,L\to\infty$ and
$L/\sqrt\beta\to\infty$, but the required uniform aliasing error bound is not
yet proved.  Second, a quantitative nonresonant finite-chain limit requires
uniform boundary and Diophantine control.  These statements remain a partial
claim and a conjecture, respectively, and are not used by any closed theorem.

Finally, the generic resonant twisted family has a large center subspace.
Linear analysis proves instability but does not determine the center dynamics
or justify a claim that these equilibria organize nonlinear transients.  Such a
claim would require a center-manifold analysis or dedicated trajectory evidence.

\section{Conclusion}
\label{sec:conclusion}

RoPE separates the score geometry from the value dynamics in spherical
self-attention.  It can break the natural vanilla energy law, but leaves enough
reversible kernel structure to compute local consensus rates exactly and enough
uniform positivity to prove contraction in explicit geometric regions.  The
resulting picture is not a universal ``RoPE prevents collapse'' statement:
consensus remains locally stable and regionally attracting, while its rate can
be exponentially small on a fixed resonant ring.  RoPE selects a
score-flattening twisted branch and an antipodal exception within the broader
landscape of polygonal attention equilibria, while multiple
frequencies make the local rate depend on finite sampling and consensus-plane
energies in a way that can be non-monotone.  Explicit theorem boundaries and
independent numerical cross-checks keep these conclusions separate from the
unresolved dense-phase and nonresonant-chain limits.

\bibliographystyle{plainnat}
\bibliography{references}

\appendix
\numberwithin{equation}{section}
\renewcommand{\theequation}{\thesection-\arabic{equation}}
\section{Structural proofs}
\label{app:structural-proofs}

\subsection{Well-posedness, detailed balance, and the sharp floor}

\begin{proof}[Proof of \cref{thm:wellposed-balance-floor}]
The right-hand side of \eqref{eq:sphere-flow} is smooth on a neighborhood of
$(\Sphere^{d-1})^n$: all scores and exponentials are smooth, and every
denominator $Z_i$ is strictly positive.  Moreover,
$\ip{x_i}{\dot x_i}=0$, so the product sphere is invariant.  Local existence
and uniqueness therefore follow from the standard ODE theorem, and compactness
of $(\Sphere^{d-1})^n$ rules out finite-time escape, giving a unique global
solution.

Orthogonality of the rotations gives
\[
s_{ji}=\ip{R_{p_j}x_j}{R_{p_i}x_i}=s_{ij},
\]
hence $W=W^\top$.  Row stochasticity is immediate from the definition of
$A$.  If $Z_\Sigma=\sum_k Z_k$, then
\[
\pi_iA_{ij}=\frac{Z_i}{Z_\Sigma}\frac{W_{ij}}{Z_i}
=\frac{W_{ij}}{Z_\Sigma}
=\frac{W_{ji}}{Z_\Sigma}=\pi_jA_{ji},
\]
which proves \eqref{eq:detailed-balance}.

Every score lies in $[-1,1]$.  For fixed $i,j$,
\[
A_{ij}
=\frac{1}{1+\sum_{k\ne j}\exp\!\bigl(\beta(s_{ik}-s_{ij})\bigr)}
\ge \frac{1}{1+(n-1)\e^{2\beta}},
\]
because $s_{ik}-s_{ij}\le2$.  To see sharpness over the model class, fix a
unit vector $u$ and distinct indices $i,j$, and choose
\[
x_k=R_{p_k}^{\top}u\quad(k\ne j),
\qquad x_j=-R_{p_j}^{\top}u.
\]
Then row $i$ has $s_{ij}=-1$ and $s_{ik}=1$ for every $k\ne j$, so equality
holds in \eqref{eq:sharp-floor}.  The comparison with
$\e^{-2\beta}/n$ follows by direct algebra, with equality only at
$\beta=0$.
\end{proof}

\subsection{The natural energy has no uniform sign law}

For the circle model, put
\[
W_{ij}=\e^{\beta\cos\delta_{ij}},\qquad
u_i=\sum_jW_{ij}\sin\delta_{ij},\qquad
v_i=\sum_jW_{ij}\sin(\theta_i-\theta_j).
\]
Since the ordered-pair energy counts both $(i,j)$ and $(j,i)$,
\begin{equation}
\frac{\partial E_+}{\partial\theta_i}=-2\beta u_i,
\qquad
\dot\theta_i=-\frac{v_i}{Z_i},
\qquad
\dot E_+=2\beta\sum_i\frac{u_iv_i}{Z_i}.
\label{eq:energy-derivative-identity}
\end{equation}

\begin{proof}[Proof of \cref{prop:energy-no-go}]
Substitution of $(p_1,p_2,p_3)=(0,1,2)$, $\beta=1$, and $\omega=\pi$
into \eqref{eq:energy-derivative-identity} gives, at the first state,
\begin{equation}
\left.\dot E_+\right|_{(0,\pi/6,\pi/3)}
=
\frac{-1+3\e^{1+\sqrt3}}
{\e^{\sqrt3/2}+\e^{1/2+\sqrt3}+\e^{1+\sqrt3}}
\approx 1.666364304459315>0.
\label{eq:positive-energy-witness}
\end{equation}
At the second state, the same identity yields
\begin{equation}
\left.\dot E_+\right|_{(0,\pi/6,0)}
=-
\frac{5\e^{1+\sqrt3/2}+4}
{\e^{\sqrt3/2}(1+2\e^{1+\sqrt3/2})(2+\e^{1+\sqrt3/2})}
\approx -0.1296131359239394<0.
\label{eq:negative-energy-witness}
\end{equation}
Thus one fixed admissible system contains states with both derivative signs.
This proves the stated no-go for $E_+$ and $-E_+$, without making a claim
about other candidate Lyapunov functions.
\end{proof}

\subsection{Consensus and twisted configurations}

\begin{proof}[Proof of \cref{prop:consensus-kernel}]
At consensus, the value average in each row equals $x^\star$, so its tangent
projection vanishes.  On rotation plane $\ell$,
\[
\ip{R_{p_i}x^\star_{(\ell)}}{R_{p_j}x^\star_{(\ell)}}
=\norm{x^\star_{(\ell)}}^2
\cos\!\bigl(\omega_\ell(p_i-p_j)\bigr).
\]
Summing over the mutually orthogonal planes proves
\eqref{eq:consensus-kernel}, including its dependence only on the energies
$a_\ell$.
\end{proof}

\begin{proof}[Proof of \cref{prop:twisted-equilibria}]
At \eqref{eq:twisted-state}, every adjusted score phase is zero, hence
$A_{ij}=1/n$ and
\begin{equation}
F_i:=\dot\theta_i
=\frac1n\sum_{j=0}^{n-1}\sin\!\bigl(\omega(i-j)\bigr).
\label{eq:twisted-drift-sum}
\end{equation}
Let $G_n(\omega)=\sum_{j=0}^{n-1}\e^{-\ii\omega j}$.  Then
$nF_i=\operatorname{Im}(\e^{\ii\omega i}G_n)$.  If
$\omega\notin2\pi\Z$, the geometric-sum formula gives
\[
G_n(\omega)
=\e^{-\ii\omega(n-1)/2}
\frac{\sin(n\omega/2)}{\sin(\omega/2)},
\]
and therefore
\begin{equation}
F_i=
\frac{\sin(n\omega/2)}{n\sin(\omega/2)}
\sin\!\left(\omega\left(i-\frac{n-1}{2}\right)\right).
\label{eq:twisted-drift-closed-form}
\end{equation}
This proves the two bounds in \eqref{eq:twisted-drift}.

If $\omega\in\pi\Z$, every summand in
\eqref{eq:twisted-drift-sum} vanishes.  Suppose instead that
$\sin\omega\ne0$ and $F_i=0$ for all $i$.  The equations for two consecutive
indices are two real linear constraints on the real and imaginary parts of
$G_n$; their determinant is $\sin\omega\ne0$.  Hence $G_n=0$, which is
equivalent to $n\omega\in2\pi\Z$.  Conversely, $G_n=0$ makes every $F_i$
zero.  This proves the equivalence \eqref{eq:twisted-iff}.

The frequency dependence in the final estimate is essential.  Taking
$\omega_n=\pi/n$ and $i=0$ gives
\[
F_0=-\frac1n\sum_{j=0}^{n-1}\sin\frac{\pi j}{n}
\longrightarrow-\frac2\pi,
\]
so no $O(n^{-1})$ bound can hold uniformly for frequencies approaching
$2\pi\Z$ with $n$.
\end{proof}

\subsection{Consensus linearization}

\begin{proof}[Proof of \cref{thm:consensus-linearization}]
Write
\[
y_i(x)=\sum_jA_{ij}(x)x_j,
\qquad F_i(x)=\proj{x_i}y_i(x).
\]
For tangent perturbations $\xi_i\perp x^\star$, the first variation at
consensus is
\[
\delta y_i
=\sum_j(\delta A_{ij})x^\star+
  \sum_jA_{ij}^\star\xi_j.
\]
Row stochasticity implies $\sum_j\delta A_{ij}=0$, so
$\delta y_i=\sum_jA_{ij}^\star\xi_j$, which is tangent at $x^\star$.
Differentiating $P_xy=y-\ip{x}{y}x$ at $x=y=x^\star$ now gives
\[
\delta(P_xy)=\delta y_i-\xi_i.
\]
This proves \eqref{eq:tangent-linearization} and
\eqref{eq:kronecker-jacobian}.

Detailed balance implies $(\pi^\star)^\top A^\star=(\pi^\star)^\top$.
Consequently, along the fixed linearized system,
\[
\frac{\mathrm d}{\mathrm dt}\sum_i\pi_i^\star\xi_i
=\sum_j\left(\sum_i\pi_i^\star A_{ij}^\star\right)\xi_j
-\sum_i\pi_i^\star\xi_i=0.
\]
The weights here are frozen at the equilibrium, which is why this calculation
does not yield a nonlinear conservation law.
\end{proof}

\section{Spectral and twisted-state proofs}
\label{app:spectral-proofs}

\subsection{Bessel aliasing on a resonant ring}

\begin{proof}[Proof of \cref{thm:bessel-spectrum}]
At consensus and under \eqref{eq:resonance-data},
\[
W^\star_{ij}=e^{\beta\cos(2\pi m(i-j)/n)}.
\]
Thus $W^\star$ is circulant and has a constant row sum $Z$; consequently
$A^\star=W^\star/Z$ is circulant, symmetric, and doubly stochastic.  Use the
absolutely convergent expansion
\begin{equation}
\e^{\beta\cos\vartheta}
=\sum_{k\in\Z}I_k(\beta)\e^{\ii k\vartheta}.
\label{eq:bessel-generating-function}
\end{equation}
Substituting \eqref{eq:bessel-generating-function}, the unnormalized
eigenvalue at Fourier mode $q$ is
\begin{align*}
\widehat W_q
&=\sum_{h=0}^{n-1}
  \e^{\beta\cos(2\pi mh/n)}\e^{-2\pi\ii qh/n}\\
&=\sum_{k\in\Z}I_k(\beta)
  \sum_{h=0}^{n-1}\e^{2\pi\ii(mk-q)h/n}\\
&=n\sum_{\substack{k\in\Z\\mk\equiv q\ (\mathrm{mod}\ n)}}I_k(\beta).
\end{align*}
Since $Z=\widehat W_0$, division proves
\eqref{eq:bessel-alias-spectrum}.

The congruence $mk\equiv q\pmod n$ is solvable exactly when
$g=\gcd(m,n)$ divides $q$.  After writing $m=gm'$ and $q=gs$, the reachable
solutions form one residue class modulo $L=n/g$, because $m'$ is invertible
modulo $L$.  For $\beta>0$, every $I_k(\beta)$ is strictly positive, so the
$L$ reachable Fourier modes have positive eigenvalues and the remaining
$n-L$ modes have eigenvalue zero.  At $\beta=0$, only $I_0(0)=1$ survives,
leaving the Perron eigenvalue one and $n-1$ zeros.
\end{proof}

\subsection{Positive spectrum and the exact equality case}

\begin{proof}[Proof of \cref{thm:psd-slowdown}]
For each sampled position define
\[
\phi_i=
\bigl(
\sqrt{a_\ell}\cos(\omega_\ell p_i),
\sqrt{a_\ell}\sin(\omega_\ell p_i)
\bigr)_{\ell=1}^{r}.
\]
Then $K_{a}(p_i-p_j)=\ip{\phi_i}{\phi_j}$, so the matrix $K=[K_{ij}]$
is positive semidefinite.  By the Schur product theorem every Hadamard power
$K^{\circ k}$ is positive semidefinite, and hence
\begin{equation}
W^\star=\exp_{\circ}(\beta K)
=\sum_{k=0}^{\infty}\frac{\beta^k}{k!}K^{\circ k}\succeq0.
\label{eq:entrywise-exponential-psd}
\end{equation}
With $D=\diag(W^\star\one)$,
\[
D^{1/2}A^\star D^{-1/2}=D^{-1/2}W^\star D^{-1/2}=:S\succeq0.
\]
Thus $A^\star$ is similar to a symmetric positive semidefinite matrix.  Its
entries are strictly positive and it is stochastic, so Perron--Frobenius gives
a simple eigenvalue one and all remaining eigenvalues strictly below one.
This proves \eqref{eq:positive-spectrum}.

At $\beta=0$, $W^\star=\one\one^\top$, so $\gamma=1$.  Let
$\beta>0$.  Because the spectrum is nonnegative, $\gamma=1$ exactly when
$W^\star$ has rank one.  Its diagonal is the constant $\e^\beta$.  Vanishing
of every $2\times2$ principal minor then gives
\[
0=\e^{2\beta}-\e^{2\beta K_{ij}},
\qquad\text{hence}\qquad K_{ij}=1
\quad\text{for all }i,j.
\]
The converse is immediate.  Finally,
\[
1-K_{ij}=\sum_\ell a_\ell
\left[1-\cos\!\bigl(\omega_\ell(p_i-p_j)\bigr)\right].
\]
All summands are nonnegative, so $K_{ij}=1$ for all pairs precisely when
every active plane satisfies \eqref{eq:invisible-planes}.
\end{proof}

\subsection{Fixed-period low-temperature asymptotics}

\begin{proof}[Proof of \cref{thm:finite-ring-asymptotics}]
The positive spectrum from \cref{thm:bessel-spectrum} is, up to a permutation,
the spectrum of the $L$-point phase grid.  Scaling all entries of its kernel by
$\e^{-\beta}$ does not alter the normalized matrix.  Put
\[
\Delta_k=1-\cos\frac{2\pi k}{L},
\qquad b_k=\e^{-\beta\Delta_k}.
\]
For $L\ge2$, the exact gap is
\begin{equation}
\gamma_L(\beta)=
\min_{1\le s\le L-1}
\frac{\displaystyle
\sum_{k=0}^{L-1}b_k
\left(1-\cos\frac{2\pi sk}{L}\right)}
{\displaystyle\sum_{k=0}^{L-1}b_k}.
\label{eq:exact-finite-ring-gap}
\end{equation}
If $L=1$, the kernel has rank one and $\gamma=1$.  If $L=2$, the scaled
weights are $b_0=1$ and $b_1=\e^{-2\beta}$, so the only nontrivial positive
eigenvalue is
\[
\lambda_1=\frac{1-\e^{-2\beta}}{1+\e^{-2\beta}}=\tanh\beta,
\]
which proves \eqref{eq:antialigned-gap}.

Now fix $L\ge3$.  The two nearest nonzero phase points, $k=1$ and $k=L-1$,
have the common deficit $\Delta_L=1-\cos(2\pi/L)$; every other nonzero
point has strictly larger deficit.  Therefore
\[
\sum_{k=0}^{L-1}b_k
=1+2\e^{-\beta\Delta_L}+o(\e^{-\beta\Delta_L}),
\]
and, for each $s=1,\ldots,L-1$,
\[
\sum_{k=0}^{L-1}b_k
\left(1-\cos\frac{2\pi sk}{L}\right)
=2\left(1-\cos\frac{2\pi s}{L}\right)
\e^{-\beta\Delta_L}+o(\e^{-\beta\Delta_L}).
\]
Because the set of $s$ is finite, the remainder is uniform over it, and
\[
\min_{1\le s\le L-1}
\left(1-\cos\frac{2\pi s}{L}\right)=\Delta_L.
\]
Substitution into \eqref{eq:exact-finite-ring-gap} proves
\eqref{eq:fixed-ring-asymptotic}.  The proof is pointwise in fixed $L$ and
therefore supplies no uniform error estimate for the joint limit described
after the theorem.
\end{proof}

\subsection{Linear spectra of twisted configurations}

At a twisted configuration, the first variation of every attention score
vanishes because the adjusted phase is zero and the derivative of cosine at
zero is zero.  Differentiating the value torque therefore gives, for a phase
perturbation $\eta$,
\begin{equation}
(J\eta)_i
=\frac1n\sum_j C_{ij}(\eta_j-\eta_i),
\qquad C_{ij}=\cos\!\bigl(\omega(i-j)\bigr).
\label{eq:general-twisted-jacobian}
\end{equation}

\begin{proof}[Proof of \cref{prop:twisted-instability}]
For a nontrivial resonant frequency, the root-of-unity sum gives
$C\one=0$, so \eqref{eq:general-twisted-jacobian} reduces to
\eqref{eq:twisted-jacobian}.  Let
$z_j=\e^{2\pi\ii mj/n}$.  If $2\bar m\not\equiv0\pmod n$, the Fourier
vectors $z$ and $\bar z$ are orthogonal, and
\[
\frac1n C=\frac12\left(
\frac{z}{\sqrt n}\frac{z^*}{\sqrt n}
+\frac{\bar z}{\sqrt n}\frac{\bar z^*}{\sqrt n}
\right).
\]
This proves \eqref{eq:generic-twisted-spectrum}.  If
$\bar m=n/2$, then $C_{ij}=(-1)^{i-j}$ has rank one and eigenvalue $n$,
which proves \eqref{eq:even-antipodal-spectrum}.  In both cases
$J\one=0$, as also follows from global phase equivariance.
\end{proof}

\begin{proof}[Proof of \cref{prop:odd-antipodal-saddle}]
Write $n=2r+1$ and $s_j=(-1)^j$ for $j=0,\ldots,n-1$, so
$\one^\top s=1$.  Then $C=ss^\top$, $C\one=s$, and
\eqref{eq:general-twisted-jacobian} becomes
\begin{equation}
J=\frac1n\bigl(ss^\top-\diag(s)\bigr).
\label{eq:odd-antipodal-jacobian}
\end{equation}
By \eqref{eq:odd-antipodal-jacobian}, the indices with $s_j=1$ number $r+1$.
Vectors supported on them and with
coordinate sum zero form an $r$-dimensional eigenspace of eigenvalue $-1/n$.
The $r$ indices with $s_j=-1$ analogously supply an $(r-1)$-dimensional
eigenspace of eigenvalue $1/n$.  Finally,
\[
J\one=0,
\qquad
J\left(s-\frac1n\one\right)=s-\frac1n\one.
\]
These invariant subspaces have total dimension $n$ and prove
\eqref{eq:odd-antipodal-spectrum}.  Removing the unique rotation mode leaves
no zero eigenvalue and leaves $r$ stable and $r$ unstable directions.
\end{proof}

\subsection{Multi-frequency geometry and analytic counterexamples}

\begin{proof}[Proof of \cref{thm:multifrequency-spectrum}]
The Gram--Schur argument in the proof of \cref{thm:psd-slowdown} applies
verbatim to $K_a$, proving $W\succeq0$, the stated similarity, and the
eigenvalue ordering.  The calculation in the proof of
\cref{thm:consensus-linearization} gives the Kronecker tangent Jacobian.
Finally, the rank-one argument following
\eqref{eq:entrywise-exponential-psd} proves the equality condition
\eqref{eq:invisible-planes}.
\end{proof}

\begin{proof}[Proof of \cref{prop:analytic-energy-nonmonotonicity}]
For lag $h$, the two-plane kernel is
\[
K_a(h)=a\cos(\pi h/2)+(1-a)\cos(\pi h).
\]
Hence $K_a(1)=a-1$ and $K_a(2)=1-2a$.  After scaling all weights by
$\e^{-\beta}$, set
\[
x=\e^{\beta(a-2)},\qquad y=\e^{-2\beta a}.
\]
The kernel and its degree vector are
\[
W=\begin{pmatrix}1&x&y\\x&1&x\\y&x&1\end{pmatrix},
\qquad
W\one=(1+x+y,\,1+2x,\,1+x+y)^\top.
\]
The reversal-odd vector $(1,0,-1)^\top$ gives the first non-Perron
eigenvalue in \eqref{eq:three-token-eigenvalues}.  The second follows from
$\operatorname{tr}(D^{-1}W)=1+\lambda_-+\lambda_+$, proving that formula.

For $\beta=\tfrac34\log2$, direct substitution at $a=0,2/3,1$ gives
exactly \eqref{eq:analytic-nonmonotone-values}.  The three values are
$0.564435\ldots$, $0.75$, and $0.668175\ldots$, respectively, which proves
the strict interior value above both endpoints claimed in the proposition.
\end{proof}

\begin{proof}[Proof of \cref{prop:analytic-frequency-reversal}]
After scaling by the common diagonal weight, the two-token kernel is
\[
W=\begin{pmatrix}1&q\\q&1\end{pmatrix},
\qquad q=\e^{\beta[\cos(\omega\Delta)-1]}.
\]
Its non-Perron eigenvalue is $(1-q)/(1+q)$, proving
\eqref{eq:two-token-gap}.  For $\beta>0$ and $\Delta=1$, the low and high
frequencies give $q_{\rm low}=\e^{-\beta}$ and
$q_{\rm high}=\e^{-2\beta}$; since the gap is increasing in $q$, the high
frequency is slower.  For $\Delta=2$, they give
$q_{\rm low}=\e^{-2\beta}$ and $q_{\rm high}=1$, so the high-frequency gap
is one and the ordering reverses.
\end{proof}

\section{Proofs of the global geometric estimates}
\label{app:global-proofs}

All functions below are evaluated along the unique solution from
\cref{thm:wellposed-balance-floor}.  We repeatedly use the uniform lower bound
$A_{ij}\ge\abar>0$.

\begin{lemma}[Finite active-set calculus]
\label{lem:active-set-calculus}
Let $f_1,\ldots,f_N$ be continuously differentiable functions and set
$f_{\min}=\min_\alpha f_\alpha$ and
$f_{\max}=\max_\alpha f_\alpha$.  If $I_{\min}$ and $I_{\max}$ are the
sets attaining the respective extrema at time $t$, then
\begin{equation}
\Dini f_{\min}(t)=\min_{\alpha\in I_{\min}(t)}\dot f_\alpha(t),
\qquad
\Dini f_{\max}(t)=\max_{\alpha\in I_{\max}(t)}\dot f_\alpha(t).
\label{eq:active-set-formulas}
\end{equation}
In particular, if
$D=\max_i\theta_i-\min_i\theta_i$, then
\begin{equation}
\Dini D
=\max_{i\in I_+}\dot\theta_i-\min_{k\in I_-}\dot\theta_k.
\label{eq:diameter-active-set}
\end{equation}
\end{lemma}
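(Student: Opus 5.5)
The proof goes directly through the definition of the upper right Dini derivative,
\[
\Dini f(t)=\limsup_{h\downarrow0}\frac{f(t+h)-f(t)}{h},
\]
and shows that for a finite minimum or maximum this $\limsup$ is in fact a limit, given by the active-set formula. Take $f_{\min}$ first. The main step is that only indices active at time $t$ matter for small $h>0$. Let $\mu=f_{\min}(t)$. Every inactive index $\alpha\notin I_{\min}(t)$ satisfies $f_\alpha(t)>\mu$. Since there are finitely many indices and each $f_\alpha$ is continuous, there is $h_0>0$ such that $f_\alpha(t+h)>f_{\alpha'}(t+h)$ for all $h\in[0,h_0)$, every inactive $\alpha$, and any fixed active $\alpha'$. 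Hence $f_{\min}(t+h)=\min_{\alpha\in I_{\min}(t)}f_\alpha(t+h)$ on that interval.

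For active $\alpha$, $f_\alpha(t)=\mu$, so by differentiability
\[
f_\alpha(t+h)=\mu+h\dot f_\alpha(t)+o(h).
\]
Taking the minimum over the finite active set commutes with this expansion: the $o(h)$ terms are finitely many, so they are uniformly $o(h)$. This gives
\[
f_{\min}(t+h)-\mu=h\min_{\alpha\in I_{\min}}\dot f_\alpha(t)+o(h).
\]
Dividing by $h$ and letting $h\downarrow0$ yields the first formula in \eqref{eq:active-set-formulas}, with the limit existing. The maximum case follows either by applying this result to $-f_\alpha$, using $\max f=-\min(-f)$, or by repeating the argument verbatim.

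For \eqref{eq:diameter-active-set}, apply both formulas to the continuously differentiable lifts $\theta_i$. These are $C^1$ because the phase flow \eqref{eq:phase-flow} is smooth by \cref{thm:wellposed-balance-floor}. Write $D=\max_i\theta_i-\min_i\theta_i$. The one-sided derivatives of $\max_i\theta_i$ and $\min_i\theta_i$ both exist as genuine right limits, not merely $\limsup$'s. The right derivative of their difference is therefore the difference of the right derivatives, with $I_+=I_{\max}$ and $I_-=I_{\min}$. This additivity is the one point that needs care: it holds only because the previous step produced actual limits, since $\limsup$ is merely subadditive. I expect this, together with the uniformity of the $o(h)$ remainders over the finite active set, to be the only nontrivial part of the proof. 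Everything else is bookkeeping.
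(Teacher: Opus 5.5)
Your proposal is correct and follows the same argument as the paper: inactive indices drop out for small $h>0$ by continuity, the $o(h)$ remainders are uniform over the finite active set, and you take the extremum of the first-order expansions. Your remark that the right derivatives of $\max_i\theta_i$ and $\min_i\theta_i$ exist as genuine limits is what justifies the subtraction step for $D$; the paper leaves this implicit in ``subtraction gives.''
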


\begin{proof}
Because the family is finite, the expansions
$f_\alpha(t+h)=f_\alpha(t)+h\dot f_\alpha(t)+o(h)$ are uniform in
$\alpha$.  Every inactive index has a positive gap from the extremal value and
remains inactive for all sufficiently small $h>0$.  Taking the minimum or
maximum of the remaining first-order expansions proves
\eqref{eq:active-set-formulas}; subtraction gives
\eqref{eq:diameter-active-set}.
\end{proof}

\subsection{Fixed hemispheres}

\begin{proof}[Proof of \cref{prop:hemisphere-invariance}]
Let $m=m_w$ and take any active minimizing index $i$.  Differentiating
$h_i=\ip{x_i}{w}$ gives
\begin{equation}
\dot h_i=\sum_jA_{ij}
\left(h_j-\ip{x_i}{x_j}m\right).
\label{eq:hemisphere-derivative}
\end{equation}
Since $h_j\ge m$,
\[
h_j-\ip{x_i}{x_j}m
\ge m\bigl(1-\ip{x_i}{x_j}\bigr).
\]
If $m\ge0$, the right-hand side is nonnegative.  If $m<0$, then
$0\le1-\ip{x_i}{x_j}\le2$, so it is at least $2m$.  The active-set lemma
therefore gives the global inequality
\begin{equation}
\Dini m\ge2\min\{m,0\}.
\label{eq:hemisphere-global-dini}
\end{equation}
The function $m$ is absolutely continuous.  For its negative part
$m_-:=\max\{-m,0\}$, \eqref{eq:hemisphere-global-dini} implies, almost
everywhere, $\dot m_-\le2m_-$.  If $m(0)\ge0$, Gr\"onwall's inequality
forces $m_-(t)=0$ for all time.  Once $m\ge0$, the active derivatives in
\eqref{eq:hemisphere-derivative} are nonnegative, so $m$ is nondecreasing and
$m(t)\ge m(0)$.
\end{proof}

\subsection{Pairwise non-obtuse data}

For $g_{ij}=\ip{x_i}{x_j}$, direct differentiation of
\eqref{eq:sphere-flow} gives
\begin{equation}
\dot g_{ij}
=\sum_kA_{ik}(g_{kj}-g_{ik}g_{ij})
+\sum_kA_{jk}(g_{ik}-g_{jk}g_{ij}).
\label{eq:pairwise-inner-product-derivative}
\end{equation}

\begin{proof}[Proof of \cref{thm:acute-contraction}]
Let $(i,j)$ be any active pair with $g_{ij}=c$.  First suppose $c<0$.
Using $g_{kj}\ge c$, $g_{ik}\le1$, and the analogous inequalities in the
second sum,
\[
g_{kj}-cg_{ik}\ge c(1-g_{ik})\ge2c,
\qquad
g_{ik}-cg_{jk}\ge c(1-g_{jk})\ge2c.
\]
Thus every active pair obeys $\dot g_{ij}\ge4c$, and
$\Dini c\ge4c$ whenever $c<0$.  At $c=0$, every summand in
\eqref{eq:pairwise-inner-product-derivative} for an active pair is
nonnegative.  Applying Gr\"onwall to $c_-:=\max\{-c,0\}$ therefore proves
that $c(0)\ge0$ implies $c(t)\ge0$ for all $t$.

Now assume $c\ge0$.  For every $k$,
\[
g_{kj}-cg_{ik}\ge c(1-g_{ik})\ge0,
\qquad
g_{ik}-cg_{jk}\ge c(1-g_{jk})\ge0.
\]
Retaining only $k=j$ in the first sum of
\eqref{eq:pairwise-inner-product-derivative} and $k=i$ in the second gives
\[
\dot g_{ij}\ge(A_{ij}+A_{ji})(1-c^2)
\ge2\abar(1-c^2).
\]
Taking the minimum over active pairs proves \eqref{eq:acute-dini}.  The scalar
comparison equation also gives $c(t)\ge\tanh(2\abar t)$ when $c(0)=0$.

At almost every time, differentiation of \eqref{eq:rho-def} yields
\[
\dot\rho=-\frac{2\dot c}{(1+c)^2}
\le-4\abar\frac{1-c^2}{(1+c)^2}
=-4\abar\rho.
\]
Gr\"onwall proves \eqref{eq:rho-decay}.  Finally,
\[
\tan^2\frac{D_{\Sphere}}2
=\frac{1-\cos D_{\Sphere}}{1+\cos D_{\Sphere}}
=\frac{1-c}{1+c}=\rho,
\]
which proves \eqref{eq:acute-half-angle}.
\end{proof}

\subsection{Strict semicircles and the sharp boundary}

\begin{proof}[Proof of \cref{thm:semicircle-contraction}]
As long as $D<\pi$, take an active maximum $i$ and active minimum $k$.
Every phase difference $\theta_j-\theta_i$ lies in $[-D,0]$, so all its
sines are nonpositive and
\[
\dot\theta_i\le-A_{ik}\sin D.
\]
Similarly, every $\theta_j-\theta_k$ lies in $[0,D]$, giving
\[
\dot\theta_k\ge A_{ki}\sin D.
\]
The active-set identity \eqref{eq:diameter-active-set} and the attention floor
now imply
\[
\Dini D\le-(A_{ik}+A_{ki})\sin D\le-2\abar\sin D.
\]
On the maximal interval where $D<\pi$, this makes $D$ nonincreasing, so
$D(t)\le D_0<\pi$ and no finite exit can occur.  Thus the estimate holds
globally.  For $q=\tan(D/2)$, almost-everywhere differentiation gives
\[
\dot q
\le-\abar\sec^2(D/2)\sin D=-2\abar q.
\]
One final application of Gr\"onwall proves \eqref{eq:semicircle-bound}.
\end{proof}

\begin{proof}[Proof of \cref{prop:bipodal-boundary}]
For $x_i=\sigma_i u$, every row average is
\[
\sum_jA_{ij}x_j=\left(\sum_jA_{ij}\sigma_j\right)u,
\]
which is parallel to $x_i$.  Its tangent projection at $x_i$ vanishes.  When
$d=2$ and both signs occur, the two phases differ by $\pi$, proving the
boundary assertion.
\end{proof}

\subsection{Convergence to one point and observable rates}

\begin{proof}[Proof of \cref{cor:single-point-tail}]
Both contraction theorems give
$q(t):=\tan(D(t)/2)\le Q(t)$.  The maximal chordal distance is therefore
\begin{equation}
L(t)=2\sin\frac{D(t)}2
=\frac{2q(t)}{\sqrt{1+q(t)^2}}
\le\frac{2Q(t)}{\sqrt{1+Q(t)^2}}.
\label{eq:chord-bound-for-tail}
\end{equation}
Using row stochasticity and the operator norm of an orthogonal projection,
\[
\norm{\dot x_i}
\le\sum_jA_{ij}\norm{\proj{x_i}(x_j-x_i)}
\le\sum_jA_{ij}\norm{x_j-x_i}
\le L(t).
\]
For $s>t$, the geodesic distance is bounded by the Riemannian length of the
trajectory segment, so \eqref{eq:chord-bound-for-tail} gives
\begin{align*}
\distS(x_i(t),x_i(s))
&\le\int_t^s\norm{\dot x_i(r)}\,\mathrm dr\\
&\le\frac1{\abar}
\left[\operatorname{arsinh}Q(t)-\operatorname{arsinh}Q(s)\right].
\end{align*}
Thus each trajectory is Cauchy.  The sphere is complete, and $D(t)\to0$, so
all token limits coincide at some $x_\infty$.  Sending $s\to\infty$ and using
$\operatorname{arsinh}z\le z$ for $z\ge0$ proves
\eqref{eq:geodesic-tail}.

Finally, $D=2\arctan q\le2q$, $L=2q/\sqrt{1+q^2}\le2q$, and
$Y=1-c=L^2/2$.  Substitution of $q(t)\le q_0\e^{-2\abar t}$ proves all
three estimates in \eqref{eq:observable-rates} and explains the factor two
between the logarithmic rates of $Y$ and $L$.
\end{proof}

\section{Numerical methods and parameter grids}
\label{app:numerical-protocol}

The calculations independently cross-check the analytic statements and do not
supply proof steps for any theorem.  All computations use double precision,
literal finite-chain position differences, and a periodic representation only
for the exactly resonant circulant problem.

\subsection{Error conventions and time integration}

For spectra, the reported error is the largest absolute discrepancy between
the compared eigenvalues after sorting, or the absolute discrepancy between
two gap formulas.  Structural zeros are instead identified from exact
congruence reachability.  For a measured decay rate $\widehat\gamma$ and a
predicted gap $\gamma>0$, the relative error is
\[
\frac{\abs{\widehat\gamma-\gamma}}{\gamma}.
\]
One-sided Dini residuals are signed so that the proved inequality corresponds
to a nonnegative value.

Circle trajectories are integrated in phase coordinates with DOP853.  The
local-rate fits start from a perturbation of amplitude $10^{-4}$ in the slow
mode, use 360 output times, and fit $\log$ transverse norm over
$0.75\le\gamma t\le3.75$ with relative and absolute solver tolerances
$10^{-10}$ and $10^{-12}$.  Sphere trajectories use classical RK4 with every
stage and accepted state retracted row-wise to the sphere.  The
multi-frequency rate calculation uses step size $10^{-2}$, the same
perturbation amplitude, and the same scaled-time fitting window.  A separate
order check compares step sizes $0.1$, $0.05$, and $0.025$ with a
phase-coordinate DOP853 reference computed at tolerances $10^{-13}$ and
$10^{-15}$; both successive error ratios exceed 14, consistent with fourth
order.

\subsection{Resonant spectra and local rates}

The resonant grid uses $n\in\{8,16,32,64,128\}$,
$\beta\in\{0.5,1,2,4,8,16\}$, and the distinct modes in
$\{0,1,2,n/4,n/2\}$.  For each of the resulting 144 configurations, the
complete spectrum from direct diagonalization of the normalized kernel is
compared with the congruence-filtered Bessel sum
\eqref{eq:bessel-alias-spectrum}; the gap is also evaluated with a
cancellation-free Fourier sum.  Scaled modified Bessel functions prevent
overflow.  The exact unreachable-mode mask is determined by $\gcd(m,n)$, so
an analytically positive mode below working precision is not misclassified as
a structural zero.

The nonlinear rate schedule consists of the triples $(n,m,\beta)$
$(8,0,2)$, $(8,1,2)$, $(8,2,2)$, $(8,4,2)$, $(16,1,1)$, $(16,4,4)$,
$(16,8,4)$, $(32,2,2)$, $(32,8,2)$, $(32,16,2)$, $(64,16,1)$, and
$(128,64,1)$.
This schedule covers invisible, coprime, non-coprime, anti-aligned, and
multiple-size regimes without treating a dense parameter scan as statistical
evidence.

\subsection{Geometric-region checks}

The pairwise non-obtuse and closed-hemisphere calculations use
$n\in\{2,5,10\}$, $d\in\{2,4,8\}$, and
$\beta\in\{0,0.5,2,4,8\}$, with 40 fixed seeded configurations per setting
and the standard RoPE
frequencies $10000^{-2\ell/d}$.  Acute states are formed around a random unit
anchor with tangent perturbation magnitudes in $[0.05,0.45]$.
Hemisphere states use margins in $[0.02,0.8]$, with one token placed exactly
on the boundary in half of the configurations.

To exercise nonsmooth active sets, equiangular states use
$n\in\{3,5,7\}$ in two orientations relative to the RoPE planes.  The grid
combines $c_0\in\{0,10^{-12},10^{-6},0.1,0.5\}$ with
$\beta\in\{0,2,8,16\}$.  Every off-diagonal pair is active in these 120
configurations.  The circle Dini grid uses
$n\in\{2,5,10\}$, the same six inverse temperatures as the resonant grid,
$\omega\in\{0,0.7,\pi\}$, initial diameters
$\{\pi/2,0.9\pi,\pi-10^{-6}\}$, and 20 fixed seeded configurations per
setting.  Integrated comparison checks use $n\in\{2,6\}$,
$\beta\in\{0,1,2,4\}$, the same three frequencies, and initial diameters
$\{\pi/2,0.9\pi,\pi-10^{-3}\}$.  Orthogonal and antipodal two-token states,
together with the sharp $\beta=16$ softmax-floor configuration, are evaluated
separately.

\subsection{Twisted states and nonresonant drift}

For each $n=3,\ldots,12$ at $\beta=2$, every nonzero resonant mode is tested
by comparing the direct right-hand side and the analytic Jacobian with a
centered finite-difference Jacobian of step $2\times10^{-7}$.  The odd
antipodal family at $\omega=\pi$ is included separately for odd $n$.
Nonresonant checks use
$\omega\in\{0.37,0.73,1.11,\sqrt2,2.17\}$ and compare the direct drift with
\eqref{eq:twisted-drift-closed-form}, its sharp finite-sum bound, and the
coarser $1/[n\abs{\sin(\omega/2)}]$ envelope.

\subsection{Multi-frequency spectra and nonlinear rates}

The multi-frequency calculations use frequencies $(1,0.01)$.  Three-way gap
comparisons are performed for $n\in\{128,256,384\}$, $\beta=4$, and
high-frequency plane energies $a\in\{0,1/2,1\}$.  Positive
semidefiniteness is evaluated for $n\in\{8,16,32,64\}$ and
$\beta\in\{0,1,4,16\}$ using 20 fixed seeded energy allocations per pair.
Exact $2\pi$ frequency
aliasing, block permutation, and invisibility of a nonzero $2\pi$ frequency
on integer positions are checked separately.  Nonlinear slow-mode rates use
$n=16$, $\beta=4$, and
$a\in\{0,1/4,1/2,3/4,1\}$.

The finite-context observations in
\cref{fig:multifrequency-nonmonotonicity} evaluate endpoint gaps for 18
context lengths between 32 and 629 and five inverse temperatures between
$0.5$ and $8$.  Plane-energy curves use 41 equally spaced allocations for
$n\in\{128,256,384\}$ at $\beta\in\{2,4\}$, together with five allocations
for $n\in\{628,629\}$ at $\beta=4$.  These rows describe finite contexts and
are not counted as independent theorem evidence.

\paragraph{Energy derivative and discrete-time limit.}

The analytic directional derivative
\eqref{eq:energy-derivative-identity} is compared with a centered difference
of the energy along the vector field using step $10^{-6}$; the two agree at
relative and absolute tolerance $2\times10^{-8}$.  The exact states
\eqref{eq:positive-energy-witness}--\eqref{eq:negative-energy-witness} are
evaluated directly and retain opposite signs.

For the normalized residual update
\eqref{eq:normalized-residual-step}, the discrete velocity is compared with
\eqref{eq:sphere-flow} at step sizes $2\times10^{-4}$ and $10^{-4}$ on a
six-token, four-dimensional configuration.  Halving the step reduces the
maximum discrepancy by at least the expected first-order factor, and the
smaller-step discrepancy is below $2\times10^{-4}$.

\end{document}